\newtheorem{theorem}{Theorem}
\newtheorem{lemma}[theorem]{Lemma}
\newtheorem{proposition}[theorem]{Proposition}
\newtheorem{remark}[theorem]{Remark}
\theoremstyle{definition}
\newtheorem{example}[theorem]{Example}
\newtheorem{definition}[theorem]{Definition}
\newcommand{\vertiii}[1]{{\left\vert\kern-0.25ex\left\vert\kern-0.25ex\left\vert #1 
		\right\vert\kern-0.25ex\right\vert\kern-0.25ex\right\vert}}
\def\sideremark#1{\ifvmode\leavevmode\fi\vadjust{\vbox to0pt{\vss 
      \hbox to 0pt{\hskip\hsize\hskip1em           
 \vbox{\hsize1.5cm\tiny\raggedright\pretolerance10000
 \noindent #1\hfill}\hss}\vbox to8pt{\vfil}\vss}}}%
\begin{document}

\title[Poincaré compactification for non-polynomial vector fields]{Poincaré compactification for\\ non-polynomial vector fields}

\author[J. L. Bravo, M. Fern\'andez, and Antonio E. Teruel]{} 
\date{\today}

\email{trinidad@unex.es}
\email{ghierro@unex.es }
\email{antonioe.teruel@uib.es}

\keywords{Poincaré compactification, polynomial vector fields, piecewise linear vector fields, Lipschitz continuous vector fields}

\maketitle

\centerline{\scshape José Luis Bravo \& Manuel Fernández}
\medskip
{\footnotesize
 \centerline{Dpto de Matemáticas \& IMUEX,  Universidad de Extremadura}
 \centerline{Avda. de Elvas s/n, 06006 Badajoz, Spain}
 \centerline{trinidad@unex.es, ghierro@unex.es} 
} 

\medskip

\centerline{\scshape Antonio E. Teruel$^*$}
\medskip
{\footnotesize
 \centerline{Dpto de Matemàtiques \& IAC3, Universitat de les Illes Balears}
 \centerline{Crt. Valldemossa s/n, 07122 Palma, Spain}
 \centerline{antonioe.teruel@uib.es}
}

\begin{abstract}

In this work a theorical framework to apply the Poincar\'e compactification technique to locally Lipschitz continuous vector fields is developed. It is proved that these vectors fields are compactifiable in the $n$-dimensional sphere, though the compactified vector field can be identically null in the equator. Moreover, for a fixed projection to the hemisphere, all the compactifications  of a vector field, which are not identically null on the equator are equivalent. Also, the conditions determining the invariance of the equator for the compactified vector field are obtained.
Up to the knowledge of the authors, this is the first time that the Poincaré compactification of locally Lipschitz continuous vector fields is studied.

These results are illustrated applying them to some families of vector fields, like polynomial vector fields, vector fields defined as a sum of homogeneous functions and vector fields defined by piecewise linear functions. 

\end{abstract}

\section{Introduction}

The study of the asymptotic behaviour of the solutions of an autonomous ordinary differential system is usually carried out throughout the compactification of the phase space, that is, mapping the phase space to a compact manifold. In the Poincar\'e compactification, this compact manifold is the $n$-dimensional sphere $\mathbb{S}^n$ centered at the origin $O$, which has the advantage of identifying the different directions at infinity by its projections on the equator.

\medskip 

Denote $H_+$ to the upper hemisphere and $E$ the equator, that is, if $z=(z_1,\ldots,z_{n+1})$ denotes the coordinates of the point $z$ in $\mathbb{S}^{n}$, then
\[H_+=\{z\in\mathbb{S}^n\colon z_{n+1}>0\},\quad E=\{z\in\mathbb{S}^n\colon z_{n+1}=0\}.\]
Consider also a diffeomorphism 
\[
\mathbb{R}^{n}\overset{h}{\longrightarrow} H_+.
\]
In the classical Poincar\'e compactification~\cite{P}, the diffeomorphism 
is defined by the stereographic projection of $\mathbb{R}^n$ onto the north hemisphere of the sphere $\mathbb{S}^{n}$ and locating the projection point at the center of the sphere. In other words, the diffeomorphism is defined 
identifying $\mathbb{R}^n$ as the hyperplane of $\mathbb{R}^{n+1}$ tangent to the sphere at the north pole $e^{n+1}=(0,\ldots,0,1)\in\mathbb{R}^{n+1}$, and defining the mapping $h$ that assigns to each point $x$ of $\mathbb{R}^n$ the intersection of $H_+$ and the line through $x$ and $O$.

\medskip 

Let $f$ be a vector field in $\mathbb{R}^n$ and denote $g$ its projection to $H_+$ by $h$. 
The vector field is called compactifiable if there is a regularization function $\rho$ (a change in the parametrization of time) such that $\rho g$ can be extended to ${H}_+\cup E$ with certain regularity (see~\cite{LT} for more details).

\medskip 

The existence of an unique integral curve through a point $z_0$ on the sphere is equivalent to the existence of a unique solution of the initial value problem $ x'= f(x)$ with $x(0)=h^{-1}(z_0)$. Moreover (see \cite{ALGM73_1}), if $F$ is a vector field on $\mathbb{S}^{n}$ continuous and locally Lipschitz continuous, then for every point $z_0$ on the sphere there exists a unique integral curve of $F$ through the point $z_0$, and the integral curve is defined for every $t\in\mathbb{R}$. Therefore, we shall require that any compactified flow be locally Lipschitz on its compact domain.
 
\medskip

The Poincar\'e compactification is applied to polynomial vector fields, with the usual projection, $h$, and the regularization function $\rho(z)=z_{n+1}^{N-1}$, where $N$ is the degree of the polinomial vector field. See e.g. \cite{MP,PT,PBG} for some recent papers using this technique. For polynomial Hamiltonian systems see~\cite{DLLP}. It has also been extended to some families of vector fields, for instance, to rational vector fields~\cite{VG} in a similar way to polynomial vector fields, or to quasi-homogeneous vector fields chosing a different projection $h$
and the same regularization function, but in this case
$N$ is defined in terms of the sum of the degrees of the homogeneous functions (see e.g. \cite{CGP,GPS}). 

\medskip 

One can consider more general projections $h$ belonging to a certain class of admissible compactifications and wonder when the compactification obtained is equivalent to the classical one. Sufficient conditions for this has been obtained in \cite{EG} for polynomial vector fields, and has been generalized to quasi-homogeneous vector fields in \cite{M}, in both cases using regularization functions of the form $\rho(z)=z_{n+1}^{N-1}$, for certain $N$. 

\medskip 

In the present paper we study the Poincar\'e compactification of vector fields only assuming they are locally Lipschitz continuous. We study the existence of regularization functions depending only of the latitude of the point in the sphere, that is, a function $\rho$  only depending on $z_{n+1}$, such that the projected vector field can be extended to $H_+\cup E$ as a locally Lipschitz continuous vector field.

\medskip 





We prove that every locally Lipschitz continuous vector field is Poincaré compactifiable, but the furnished compactification 
could be zero on the equator. So, every point in the equator is a rest point and the compactification hides the dynamic at infinity. To avoid this situation we define non-null Poincaré compactifiable vector fields, and we prove the equivalence of any non-null compactification of a fixed vector field. 
Indeed, we obtain an explicit expression of the regularization function for any non-null compactifiable vector field.

The definition of compactification given in this paper does not imply the invariance of the equator, so we establish a characterization of non-null compactifiable vector fields with invariant equator, in this case, taking $h$ as the classical projection of Poincar\'e. 

\medskip 

Next, we apply the obtained results to some families of vector fields, to give some thought to the compactification properties of three families of vector fields: Polynomial vector fields, polynomial-growth vector fields, and  piecewise polynomial systems. For the first family we recover the clasical Poincaré result, and it brings out the fact that the compactified vector field is identically null on the equator if and only if the polynomial vector field of degree $N$ is $f(x)= q(x)x+R(x)$, where $q(x)$ is a scalar polynomial of degree $N-1$ and $R(x)$ is a polynomial of degree strictly lower than $N$. We also note that, like in \cite{GPS}, the above resuls extends to the case
$$
f(x)= \sum_{l=0}^N f_{l}(x),
$$
where $f_{l}(x)$ is a enough regular  homogeneous function of degree $l$, i.e.
$$
f_{l}(\lambda x) = \lambda^l f_{l}(x), \quad \lambda\in\mathbb{R},\  x\in\mathbb{R}^n.
$$	

\medskip  

Vector fields that grows as polynomial as $\| x\| \to \infty$ can be compactified in a similar way to polynomial fields. Under hypotheses that guarantees the Lipschitz continuity of the  vector field on the closed upper hemisphere, we establish the Poincaré compactification with invariant equator. 

\medskip 

The last part is devoted to the compactification of piecewise polynomial vector fields. As a consequence of the results we obtain that piecewise linear (PWL) vector fields  are non-null compactifiable vector fields and we characterize the invariance of the equator. These results are an extension of those presented in \cite{LT} to a general dimension phase space and to a vector fields with finite number of linear pieces.

\medskip 

The structure of the paper is as follows, in Section~2, we establish the general theory and in Section~3, we apply it to the 
three families above mentioned.

\section{Poincaré compactifiable vector fields}

This Section deals with the Poincaré compactification of locally Lipschitz continuous vector fields defined on $\mathbb{R}^n$.
Consider a differential equation
\begin{equation}\label{eq:vf}
x'=f(x),
\end{equation}
where the vector field $f\colon \mathbb{R}^n\to\mathbb{R}^n$ is a locally Lipschitz continuous function. 
Let us project the vector field defined by \eqref{eq:vf} to the upper hemisphere. 
To this end,  we fix the diffeomorphism $h:\mathbb{R}^n\rightarrow H_{+}$. 

\medskip 

The \textit{projected vector field} is then given by the differential equation
\begin{equation}\label{eq:proj_vf}
z'=g(z):=Dh\left(h^{-1}(z)\right)f\left(h^{-1}(z)\right).
\end{equation}
See Figure~\ref{fig:prcmfig2} for a graphical representation of the compactification process. Notice that in Figure~\ref{fig:prcmfig2} the sterographical projection $h$ is represented.

\medskip 

To study the behaviour near the equator, we introduce a new system of coordinates on $H_+\cup E$ minus the north pole $e_{n+1}$, using the 
diffeomorphism
\[
E\times [0,1) \longrightarrow   (H_+ \cup E)\backslash\{e^{n+1}\},
\]
defined by 
\begin{equation}\label{eq:zdelta}
(z,\delta)\to z_\delta = z \sqrt{1-\delta^2}+ \delta e^{n+1}.
\end{equation}
We note 
that varying $z$  and keeping $\delta$ constant then $(z,\delta)$ is a parallel of $H_+\cup E$, and varying $\delta$ and keeping $z$ constant then $(z,\delta)$ is a meridian.

Let $\pi:H_+ \cup E \rightarrow \mathbb{R}^n$ be the projection of the first $n$--coordinates,
that is $\pi(z)=(z_1, \ldots, z_n)^T$.  Then, if $(z_1,\ldots,z_{n+1})$ are the coordinates of $z_\delta$,   \[z_{n+1}=\sqrt{1-\|\pi(z)\|^2}=\delta> 0.\]

\begin{figure}[ht]
 \includegraphics{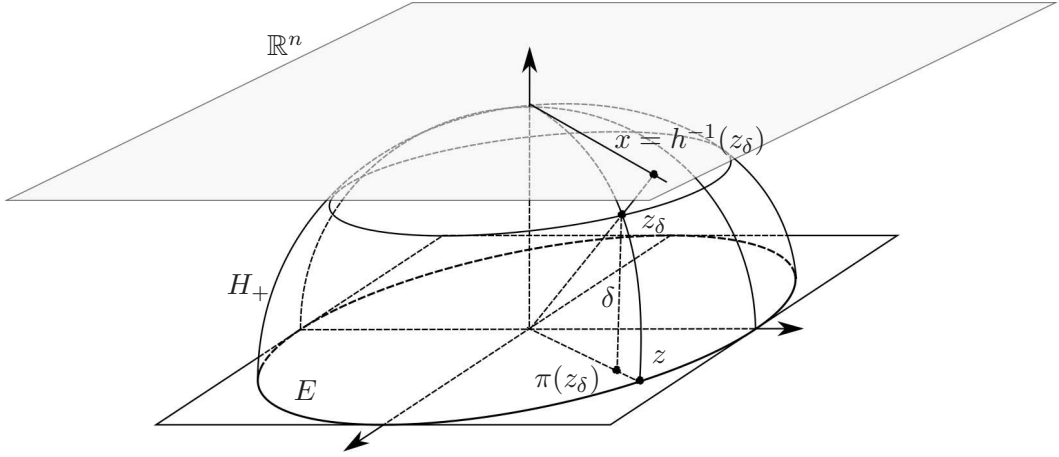}
 \begin{picture}(0,0)
  \put(-170,115){$x=h^{-1}(z_{\delta})$}
  \put(-160,85){$z_{\delta}$}
  \put(-156,34){$z$}
  \put(-200,25){$\pi(z_{\delta})$}
  \put(-175,55){$\delta$}
  \put(-300,150){$\mathbb{R}^n$}
  \put(-315,60){$H_+$}
  \put(-290,20){$E$}
 \end{picture}
 \caption{Poincaré compactification of a vector field $f$. The phase space $\mathbb{R}^n$ is identified with the hyperplane of $\mathbb{R}^{n+1}$ tangent to the unit sphere $\mathbb{S}^n$ at the north pole $e^{n+1}$. The stereographic projection $h$ maps the phase space onto the north hemisphere $H_+$ and induces the projected vector field $g$ on $H_+$. The compactified vector field $F_{\rho}$ is the Lipschitz continuous extension of $g$ to the equator $E$ after a regularizating chage of coordinates.}\label{fig:prcmfig2}
\end{figure}

\medskip 

\begin{definition}\label{def:compactifiable}
Let $f(x)$ be a locally Lipschitz continuous funtion in $\mathbb{R}^n$. We say that $x'=f(x)$ is a {\it Poincaré compactifiable} vector field by a projection $h$ if there exists a function $\rho:(0,1] \to \mathbb{R}^+= \{\delta \in\mathbb{R} : \delta>0\}$ such that the function
$\rho(z_{n+1})g\left( z \right)$ admits a Lipschitz continuous extension to $H_+ \cup E$.	
\end{definition}

\medskip 

The function $\rho$ will be called \textit{regularization function} and the extension to $H_+ \cup E$ of the regularized vector field $\rho(z_{n+1})g\left( z \right)$ will be called \textit{the compactified vector field}. 

Note that, in Definition 1, no condition on $\rho$ has been established but the fact that function $\rho(z_{n+1})g\left( z \right)$ admits a Lipschitz continuous extension to $H_+ \cup E$. Later on, and under additionally conditions for the vector field $f$, some properties on $\rho$ will be derived.

Thus, if $f$ is a compactifiable vector field and $g$ is given in \eqref{eq:proj_vf}, for every $z\in E$, there exists   
\begin{align*}
v_{\rho}(z):=\lim_{\delta\to 0+} \rho(\delta)g(z\sqrt{1-\delta^2}+\delta e^{n+1}),
\end{align*}
 and the compactified vector field writes as
\begin{equation}\label{eq:compvf}
F_\rho(z)=\begin{cases}
  \rho(z_{n+1})g\left( z \right) & \text{ if } z_{n+1}>0, \\
  v_{\rho}(z) & \text{ if } z_{n+1} =0,
\end{cases} 
\end{equation}
and it is Lipschitz continuous in $H_+ \cup E$. 

\medskip 

For short, let us define the function 
\begin{equation}\label{fun:g}
	G:E\times (0,1] \to \mathbb{R}^n,\quad 
	G(z,\delta)=g(z_{\delta}),
\end{equation}
where $z_{\delta}$ is given in \eqref{eq:zdelta}. So,
\begin{align*}
	v_{\rho}(z):=\lim_{\delta\to 0+} \rho(\delta)G\left(z,\delta\right).
\end{align*}


In the next Proposition we characterize the compactifiable vector fields in terms of the behaviour of the regularized vector field.
\begin{proposition}\label{prop:Poincare-compactifiable}
A vector field $f(x)$ is Poincaré compactifiable for a projection $h$ if and only if
there exists a function $\rho\colon (0,1]\to \mathbb{R}^+$
such that 
\begin{itemize}
	\item [a)] There exists $\lim_{\delta\to 0+} \rho(\delta)G(z,\delta)$, uniformly in $z\in E$.
	
\item [b)]
$\rho(z_{n+1})g\left( z \right)$ is globally Lipschitz continuous in $H_+$.
\end{itemize}
\end{proposition}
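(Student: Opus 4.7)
\medskip

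The plan is to prove the two implications separately, using the fact that $H_+\cup E$ is the closed upper hemisphere of $\mathbb{S}^n$ and hence compact, and that for every $z\in E$ one has $z_\delta\to z$ as $\delta\to 0^+$ with an explicit, $z$-independent rate.

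\medskip

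For the direct implication I would assume that $f$ is Poincaré compactifiable with regularization $\rho$, so that $\rho(z_{n+1})g(z)$ extends to a Lipschitz continuous function $F_\rho$ on $H_+\cup E$ with some constant $L$. Condition (b) follows at once: the restriction $F_\rho|_{H_+}$ coincides with $\rho(z_{n+1})g(z)$ and inherits the Lipschitz constant $L$, which is global because the bound holds for every pair of points in $H_+$. For condition (a), fix $z\in E$. Since $z_\delta\in H_+$ and $F_\rho$ extends $\rho(z_{n+1})g$, we have $\rho(\delta)G(z,\delta)=F_\rho(z_\delta)$, and the Lipschitz estimate gives
\begin{equation*}
\bigl|\rho(\delta)G(z,\delta)-F_\rho(z)\bigr|\le L\,|z_\delta-z|.
\end{equation*}
Using $z_{n+1}=0$ and $\|z\|=1$, a direct computation yields
$|z_\delta-z|^2=(1-\sqrt{1-\delta^2})^2+\delta^2$,
a quantity that tends to $0$ with $\delta$ and is independent of $z$. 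This furnishes the uniform convergence, and simultaneously identifies the limit as $v_\rho(z)=F_\rho(z)$.

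\medskip

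For the converse, I would take $\rho$ satisfying (a) and (b), define $v_\rho$ on $E$ via the uniform limit in (a), and set $F_\rho$ by formula \eqref{eq:compvf}. Continuity of $v_\rho$ on $E$ is automatic: for each $\delta>0$ the map $z\mapsto\rho(\delta)G(z,\delta)=\rho(\delta)g(z_\delta)$ is continuous on $E$ (composition of the continuous parametrization \eqref{eq:zdelta} with $g$), and uniform convergence preserves continuity. To check that $F_\rho$ is Lipschitz on $H_+\cup E$ with the same constant $L$ as in (b), I would argue case by case. The case where both points lie in $H_+$ is exactly (b). If $z,w\in E$, I approximate by $z_\delta,w_\delta\in H_+$; (b) gives
\begin{equation*}
\bigl|F_\rho(z_\delta)-F_\rho(w_\delta)\bigr|\le L\,|z_\delta-w_\delta|=L\sqrt{1-\delta^2}\,|z-w|,
\end{equation*}
and letting $\delta\to 0^+$ the uniform convergence from (a) lets me pass to the limit on the left, yielding $|F_\rho(z)-F_\rho(w)|\le L|z-w|$. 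The mixed case $z\in H_+$, $w\in E$ is handled analogously, approximating only $w$ by $w_\delta$ and again invoking (a) to take the limit.

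\medskip

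I expect the main subtlety to be precisely the passage to the limit in the equator-to-equator Lipschitz estimate: pointwise convergence of $\rho(\delta)G(\cdot,\delta)$ would not be enough to guarantee that the limit function inherits the Lipschitz bound with the same constant in a clean way, whereas the uniform convergence postulated in (a) makes both the continuity of $v_\rho$ and the limit step above automatic. The rest is essentially bookkeeping with the explicit formula $z_\delta=z\sqrt{1-\delta^2}+\delta e^{n+1}$ and the fact that $\|z\|=1$ on $E$.
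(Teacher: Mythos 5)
Your proof is correct and follows essentially the same route as the paper's: the forward direction extracts (b) by restriction and (a) from the Lipschitz estimate on the compact hemisphere, and the converse proceeds by the same case analysis on whether the two points lie in $H_+$ or $E$. The only (harmless) differences are that you make the uniform rate $|z_\delta-z|^2=(1-\sqrt{1-\delta^2})^2+\delta^2$ explicit, and in the equator--equator case you approximate both points with the same $\delta$, which slightly streamlines the paper's triangle-inequality argument with two independent parameters $\delta,\bar\delta$.
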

\begin{proof}
Assume that $f(x)$ is a Poincaré compactifiable vector field. Therefore, there exists a regularization function $\rho:(0,1]\to \mathbb{R}^+$ such that the compactified vector field $F_\rho(z)$, given in \eqref{eq:compvf}, is locally Lipschitz continuous on the compact manifold $H_+\cup E$. Hence, $F_\rho(z)$ it is globally Lipschitz on $H_+\cup E$ which proves (b). Moreover, since $F_\rho(z)$ is uniformly continuous in a compact set, $v_{\rho}(z)$ exists for every $z\in E$ and the limit is uniform in $z\in E$, which proves (a).

\medskip 

Conversely, assume that the statements (a) and (b) are satisfied. From statement (a) function $v_{\rho}(z)$ is well defined and we can define $F_\rho(z)$ as in \eqref{eq:compvf}. From statement (b), $F_\rho$ is globally Lipschitz continuous on $H_+$. Then we only need to check that
$\| F_{\rho}(z)-F_{\rho}(\bar z)\|\leq L\|z-\bar{z}\|$ for $z\in E$, $\bar{z}\in H_+$ and for $z,\bar{z}\in E$, where $L$ is the global Lipschitz constant of $\rho(z_{n+1})g(z)$ in $H_+$. 

Assume we are in the first case, let $z_{\delta}= z\sqrt{1-\delta^2}+\delta e^{n+1}$, then
\begin{align*}
\| F_{\rho}(z)-F_{\rho}(\bar z)\|&= \lim_{\delta\to 0^+}\| \rho(\delta)g(z_{\delta})-\rho(\bar z_{n+1})g(\bar z)\| \\
&\leq \lim_{\delta\to 0^+} L \|z_{\delta}-\bar{z}\|=L\|z-\bar z\|.	
\end{align*}	
Consider now the second case and  assume $z,\bar z \in E$. Set $z_{\delta}= z\sqrt{1-\delta^2}+\delta e^{n+1}$ and $\bar z_{\bar\delta}=\bar z\sqrt{1-\bar\delta^2}+\bar\delta e^{n+1}$. From 
\begin{align*}
\| F_{\rho}(z)-F_{\rho}(\bar z)\|& \leq 
\| F_{\rho}(z)-\rho(\delta)g(z_{\delta})\| + 	\| F_{\rho}(\bar z)-\rho(\bar\delta)g(\bar z_{\bar\delta})\| + \| \rho(\delta)
g(z_{\delta})-\rho(\bar \delta)g(\bar z_{\bar\delta})\| \\
& \leq \| F_{\rho}(z)-\rho(\delta)g(z_{\delta})\|+ 	\| F_{\rho}(\bar z)-\rho(\bar\delta)g(\bar z_{\bar\delta})\|+L\|z_{\delta}-\bar z_{\bar\delta} \|
\end{align*}	
we obtain $F_{\rho}(z)$ Lipschitz continuous in $H_+ \cup E$ using the uniform convergence in statement (a).
\end{proof}

The next result proves that for every Lipschitz continuous vector field and every projection $h$, there always exists a regularization function $\rho(\delta)$ that compactifies the vector field, although the dynamic at infinity is trivial. 

\begin{theorem}\label{theorem:tc}
Every locally Lipschitz continuous vector field $f$ is Poincaré compactifiable for any projection $h$.
\end{theorem}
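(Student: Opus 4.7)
The plan is to invoke Proposition~\ref{prop:Poincare-compactifiable}: we construct a $\rho$ that annihilates $g$ on the equator, yielding the null compactification foreshadowed in the introduction. Note first that $g$ is locally Lipschitz on $H_+$ (composition of the locally Lipschitz $f$ with the smooth maps $h$, $h^{-1}$ and the continuous $Dh$). For each $\delta\in(0,1]$ the closed cap
\[
K_\delta=\{z\in H_+ : z_{n+1}\geq\delta\}
\]
is a compact subset of $H_+$, so a standard Lebesgue-number argument shows that $g$ is bounded and Lipschitz on $K_\delta$. Set
\[
M(\delta)=\sup_{K_\delta}\|g\|,\quad L(\delta)=\mathrm{Lip}(g|_{K_\delta}),\quad \Phi(\delta)=M(\delta)+L(\delta)+1.
\]
Since $K_{\delta'}\supset K_\delta$ whenever $\delta'\leq\delta$, both $M$ and $L$, and hence $\Phi$, are non-increasing, and $\Phi\geq 1$.

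\medskip

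Define
\[
\rho(\delta)=\int_0^\delta\frac{ds}{\Phi(s)}.
\]
Because $1/\Phi$ is continuous and bounded by $1$ on $(0,1]$, this gives a differentiable positive function on $(0,1]$ with $\rho'(\delta)=1/\Phi(\delta)$. Monotonicity of $\Phi$ provides the two key estimates
\[
\rho(\delta)\leq\frac{\delta}{\Phi(\delta)}\qquad\text{and}\qquad |\rho(\delta)-\rho(\bar\delta)|\leq\frac{|\bar\delta-\delta|}{\Phi(\max\{\delta,\bar\delta\})}.
\]

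\medskip

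Now check conditions (a) and (b) of Proposition~\ref{prop:Poincare-compactifiable}. For (a), $z_\delta\in K_\delta$ whenever $z\in E$, so
\[
\|\rho(\delta)G(z,\delta)\|\leq \rho(\delta)M(\delta)\leq \frac{\delta M(\delta)}{\Phi(\delta)}\leq \delta\longrightarrow 0,
\]
uniformly in $z\in E$; in particular $v_\rho\equiv 0$ on $E$. For (b), given $z,\bar z\in H_+$ with $\delta=z_{n+1}\leq\bar\delta=\bar z_{n+1}$, decompose
\[
\rho(\delta)g(z)-\rho(\bar\delta)g(\bar z)=\rho(\delta)\bigl(g(z)-g(\bar z)\bigr)+\bigl(\rho(\delta)-\rho(\bar\delta)\bigr)g(\bar z).
\]
Since $z,\bar z\in K_\delta$, the first piece is bounded by $\rho(\delta)L(\delta)\|z-\bar z\|\leq\|z-\bar z\|$. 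The second, using $\|g(\bar z)\|\leq M(\bar\delta)$ and the estimate above, is bounded by
\[
\frac{(\bar\delta-\delta)\,M(\bar\delta)}{\Phi(\bar\delta)}\leq |z_{n+1}-\bar z_{n+1}|\leq\|z-\bar z\|.
\]
Hence $\rho(z_{n+1})g(z)$ is globally $2$-Lipschitz on $H_+$, and Proposition~\ref{prop:Poincare-compactifiable} yields the claim.

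\medskip

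The main obstacle is satisfying (a) and (b) simultaneously: $\rho$ must vanish rapidly enough near $\delta=0$ to dominate the possibly unbounded $M(\delta)$, yet its variation on the scale of $M(\delta)$ must remain small, or else the Lipschitz constant of $\rho(z_{n+1})g(z)$ blows up. The prescription $\rho'=1/\Phi$ resolves this tension in one shot, since both $\rho\cdot \Phi\leq\delta$ and $\rho'\cdot\Phi\equiv 1$ become immediate consequences of the monotonicity of $\Phi$.
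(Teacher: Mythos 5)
Your proposal is correct and follows essentially the same strategy as the paper's proof: both take $\rho$ to be an antiderivative of a positive function dominated by the reciprocal of a monotone majorant of the sup norm and Lipschitz constant of $g$ on the caps $\{z_{n+1}\geq\delta\}$, so that monotonicity yields exactly the two estimates $\rho(\delta)\Phi(\delta)\leq\delta$ and $|\rho(\delta)-\rho(\bar\delta)|\leq|\delta-\bar\delta|$ that make the (null) compactification Lipschitz; the paper merely builds its integrand by linear interpolation and re-proves the extension to $E$ by hand instead of citing Proposition~\ref{prop:Poincare-compactifiable}. (One cosmetic point: $1/\Phi$ need not be continuous, since the optimal Lipschitz constant $L(\delta)$ is only monotone, but your argument never uses $\rho'=1/\Phi$ pointwise --- only the two integral estimates from monotonicity --- so nothing is lost.)
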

\begin{proof}

Given a locally  Lipschitz continuous vector field $f$, we will prove that there exists a differentiable function $\rho:(0,1]\to \mathbb{R}^+$ such that $F_{\rho}$ is locally Lipschitz continuous in $H_+\cup E$.  

The projected vector field $g$ defined in \eqref{eq:proj_vf} is a locally Lipschitz function provided $f$ is a locally Lipschitz continuous vector field and $h$ is a diffeomorphism. Then, function $G$ defined in \eqref{fun:g} is locally Lipschitz continuous. Let us denote by $L_G(\delta)$ the Lipschitz constant of the function $G$ on the compact set $E\times [\delta, 1]$.

We claim that there exists a function $\rho\in \mathcal{C}^1((0,1])$, 
such that 
 \begin{equation}\label{th1_key}
 \max\left\{\rho(\delta),\rho'(\delta)\right\} m(\delta)\leq \delta, \quad \text{for every }\delta\in(0,1],
 \end{equation}
where $m(\delta)=\max\left\{1, L_G(\delta), \max_{(z,\sigma)\in E\times [\delta,1]} \|G(z,\sigma)\| \right\}$.

 Since $E\times [\delta_1,1]\subset E \times [\delta_2,1]$ if $\delta_1>\delta_2$, $m(\delta)$ is a decreasing function.
By linear interpolation at the nodes 
$$\left\{\left(\frac 1 k, \frac 1 {(k+1)m(1/(k+1))}\right)\right\}_{k=1}^{\infty},
$$
we define a continuous, positive and strictly increasing function 
$\tilde{\rho}(\delta)$  such that $\tilde{\rho}(\delta)\leq \delta/m(\delta)$. 
Set $\rho(\delta)= \int_0^{\delta}\tilde{\rho}(r)\,dr$.
Then 
\[
0\leq \rho(\delta)=\int_0^\delta \tilde{\rho}(r)\,dr\leq \tilde{\rho}(\delta) \delta\leq \tilde{\rho}(\delta)\leq \delta/m(\delta).
\] 

Note that $v_{\rho}$ 
 is identically null. Indeed by \eqref{th1_key},
\[
\|v_{\rho}(y)\|=\lim_{\delta\to 0+} \rho(\delta)\|G\left(y,\delta\right)\|
\leq \lim_{\delta\to 0+} \rho(\delta) m(\delta)
\leq \lim_{\delta\to 0+} \delta =0.
\]
Next, we prove that the function $F_{\rho}(z)$ is locally Lipschitz continuous. As the function is locally Lipschitz continuous in $H_+$, we only need to prove that it is locally Lipschitz continuous in a neighborhood of $E$. Let us consider a point of the equator and a neighborhood $U$ of this point. Let $z, \bar z \in U$. We are going to bound $\|F_{\rho}(z)-F_{\rho}(\bar z)\|/\| z-\bar z\|$ for $z, \bar z \in U$, $z \neq \bar z$.  

\medskip 

If $z,\bar z \in E$, then the bound is $0$ since, according to \eqref{eq:compvf}, $F_{\rho}(z)=v_{\rho}(z)=0$ and $F_{\rho}(\bar z)=v_{\rho}(\bar{z})=0$.

\medskip 

Assume $z\in H_+$, $\bar z\in E$. There exist $z_e\in E$, $0<\delta<1$, such that $z=z_e \sqrt{1-\delta^2}+\delta e^{n+1}$. Since $F_{\rho}(\bar z)=0$, then 
\[
 \|F_{\rho}(z)-F_{\rho}(\bar z)\|=\|\rho(\delta) G(z_e,\delta)\|\leq \delta=|z_{n+1}-\bar z_{n+1}| \leq \|z- \bar z\|.
\]

\medskip 

Finally, assume that $z,\bar z\in H_+$, that is, $z=z_e\sqrt{1-\delta^2}+ \delta e^{n+1}$, $\bar z=\bar z_e\sqrt{1-\bar\delta^2}+ \bar \delta e^{n+1}$, for certain $z_e,\bar z_e \in E$,    $0<\delta, \bar\delta<1$. We may assume $\delta<\bar \delta$. Then
\begin{align*}
 \frac{\|F_{\rho}(z)-F_{\rho}(\bar z)\|}{\|z-\bar z\|}&=\frac{\|\rho(\delta) G(z_e,\delta)-\rho(\bar\delta) G(\bar z_e,\bar\delta)\| }{\|z-\bar z\|}\\
 &\leq \rho(\delta) \frac{\|G(z_e,\delta)-G(\bar z_e,\delta)\|}{\|z-\bar z\|} +
 \frac{|\rho(\delta)-\rho(\bar\delta)|}{\|z-\bar z\|} \|G(\bar z_e,\bar\delta)\| \\
 &\leq \rho(\delta) L_{G}(\delta) + \frac{ \rho'(\xi) (\bar\delta-\delta) m(\bar\delta)}{\|z-\bar z\|}\\
 &\leq  \rho(\delta) L_{G}(\delta) + \rho'(\xi) m(\bar\delta),
\end{align*}
where $\delta <\xi <\bar\delta$. Since $\rho(\delta)L_G(\delta)\leq \rho(\delta)m(\delta)\leq \delta$, and since  $\rho'$ is an increasing function, from \eqref{th1_key} we obtain, $\rho'(\xi)m(\bar\delta)\leq \rho'(\bar\delta)m(\bar\delta)\leq \bar\delta$.
Then 
\[
 \frac {\|F_{\rho}(z)-F_{\rho}(\bar z)\|}{\|z-\bar z\|} \leq \delta+\bar\delta \leq 2.
\]
\end{proof}

\subsection{Non-null Poincaré compactification}

Theorem~\ref{theorem:tc} states that every locally Lipschitz continuous vector field is Poincaré compactifiable, but the compactified vector field it provides is identically null along the equator. In this subsection we study compactifications with a non-trivial dynamics in the equator.

\medskip 

We say that the compactification is \textit{identically null} when  $v_\rho \equiv 0$  and 
\textit{non-null} otherwise. To emphasize the dependence of the compactification on the function $\rho$, we will say $f$ is a Poincaré compactifiable vector field by the regularization function $\rho$.

Next we  prove that any two non-null Poincaré compatifications are topologically equivalent via the identity.

\begin{proposition}\label{pro:nonnull}
A locally Lipschitz continuous vector field $f(x)$ is a non-null Poincaré compactifiable vector field for a projection $h$ if 
and only if there exist  $\bar{z} \in E$ and  $0<\bar{\delta}<1$ such that the following function is defined 
\begin{equation*}
	\bar{\rho}(\delta)=\begin{cases}
	\|G(\bar{z},\delta)\|^{-1}, & \text{ if } 0<\delta<\bar{\delta}, \\
	\|G(\bar{z},\bar{\delta})\|^{-1}, & \text{ if } \bar{\delta} \leq \delta \leq 1,
	\end{cases}
\end{equation*}
and the vector field is non-null Poincaré compactifiable by the regularization function $\bar \rho$.
\end{proposition}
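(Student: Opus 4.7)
The ``if'' direction is immediate: if such $\bar\rho$ exists and witnesses a non-null compactification, then by Definition~\ref{def:compactifiable} the vector field $f$ is non-null Poincar\'e compactifiable. For the converse I would start from an arbitrary regularization function $\rho$ with $v_\rho\not\equiv 0$ and pick $\bar z\in E$ with $v_\rho(\bar z)\neq 0$. Because $\rho(\delta)G(\bar z,\delta)\to v_\rho(\bar z)\neq 0$ and $\rho>0$, one has $G(\bar z,\delta)\neq 0$ for all sufficiently small $\delta$, so I can fix $\bar\delta\in(0,1)$ with $G(\bar z,\delta)\neq 0$ on $(0,\bar\delta]$; then $\bar\rho$ is well defined. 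What remains is to verify conditions (a) and (b) of Proposition~\ref{prop:Poincare-compactifiable} for $\bar\rho$, and to check $v_{\bar\rho}\not\equiv 0$.

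The key device is the factorization
\[
\bar\rho(\delta)=\frac{\rho(\delta)}{\phi(\delta)},\qquad \phi(\delta):=\rho(\delta)\,\|G(\bar z,\delta)\|=\|F_\rho(\bar z_\delta)\|,
\]
valid on $(0,\bar\delta)$. Since $F_\rho$ extends continuously to $H_+\cup E$ with $F_\rho(\bar z)=v_\rho(\bar z)$, one has $\phi(\delta)\to\|v_\rho(\bar z)\|>0$ as $\delta\to 0^+$; shrinking $\bar\delta$ if necessary, I may assume $\phi\geq c:=\|v_\rho(\bar z)\|/2$ on $[0,\bar\delta]$. Condition~(a) then drops out: for every $z\in E$ the quotient $\bar\rho(\delta)G(z,\delta)=\rho(\delta)G(z,\delta)/\phi(\delta)$ converges to $v_\rho(z)/\|v_\rho(\bar z)\|$, uniformly in $z$, because the numerator converges uniformly by condition~(a) for $\rho$ and the denominator is independent of $z$ and bounded below. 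Taking $z=\bar z$ gives $v_{\bar\rho}(\bar z)=v_\rho(\bar z)/\|v_\rho(\bar z)\|$, a unit vector, so the compactification is non-null.

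For condition~(b) I would split $H_+$ at $z_{n+1}=\bar\delta$. On the slab $\{z_{n+1}<\bar\delta\}$ the factorization rewrites $\bar\rho(z_{n+1})g(z)$ as $F_\rho(z)/\phi(z_{n+1})$; since $F_\rho$ is globally Lipschitz by hypothesis, $\phi$ is Lipschitz on $[0,\bar\delta]$ as a composition of the smooth map $t\mapsto\bar z_t$ with the Lipschitz map $z\mapsto\|F_\rho(z)\|$, and $\phi\geq c>0$, a standard product/quotient estimate yields Lipschitz continuity. On the compact region $\{\bar\delta\leq z_{n+1}\leq 1\}\cap H_+$, $\bar\rho$ is constant and $g$ is locally Lipschitz, hence globally Lipschitz there by compactness. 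The two pieces agree at $z_{n+1}=\bar\delta$, so they glue to a Lipschitz function on all of $H_+$.

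The main obstacle, which the factorization neutralises, is that $\bar\rho$ is defined precisely to cancel the growth of $\|g\|$ near the equator and may itself be unbounded as $\delta\to 0^+$, so a naive Lipschitz estimate of $\bar\rho(z_{n+1})g(z)$ is hopeless. Rewriting the product as $F_\rho(z)/\phi(z_{n+1})$ trades the two singular factors for a quotient of two tame Lipschitz quantities and reduces the verification to standard calculus.
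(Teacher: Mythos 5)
Your proposal is correct and follows essentially the same route as the paper: the factorization $\bar\rho(\delta)G(z,\delta)=\rho(\delta)G(z,\delta)/\|\rho(\delta)G(\bar z,\delta)\|$ for the limit on $E$, and the rewriting of $F_{\bar\rho}$ as $g(z)\,\|G(\bar z,\bar\delta)\|^{-1}$ on $\{z_{n+1}\geq\bar\delta\}$ and as $F_\rho(z)/\|F_\rho(\bar z\sqrt{1-z_{n+1}^2}+z_{n+1}e^{n+1})\|$ on the slab near the equator, are exactly the paper's displays \eqref{campo-tilde1}--\eqref{campo-tilde2}. Your explicit lower bound $\phi\geq c>0$ after shrinking $\bar\delta$ is a slightly more careful justification of the quotient estimate than the paper's ``denominator does not vanish,'' but the argument is the same.
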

\begin{proof}
Assume that $f$ is a non-null Poincaré compactifiable vector field. Then there exists a regularization function
$\rho\colon (0,1] \to \mathbb{R}_+$ such that $F_\rho(z)$ is locally Lipschitz continuous in $H_+ \cup E$ and the function $v_{\rho}(z)$ is not idendically null in $E$. Take $\bar{z} \in E$ such that $v_{\rho}(\bar{z})\neq0$. By continuity, 
there exists $\bar \delta$ such that $G(\bar{z},\delta)\neq0$ in $(0,\bar{\delta}]$. Therefore, the function $\bar{\rho}(\delta)$, given in the statement of the proposition, is well defined.
Now, we are going to prove that the function $F_{\bar \rho}(z)$ given in \eqref{eq:compvf}, but with regularization function $\bar{\rho}$, is Lipschitz continuous in $H_+ \cup E$. Before that,
let $z \in E$. Since $\rho(\delta)>0$ and $v_{\rho}(\bar z)\neq 0$, it follows that
\[
\lim_{\delta \to 0+} \frac{\rho(\delta) G(z,\delta)}{\|\rho(\delta) G(\bar{z},\delta)\|}=\frac{v_{\rho}(z)}{\|v_{\rho}(\bar z)\|}.
\]
 Therefore 
\begin{equation*}\begin{split}
v_{\bar{\rho}}(z) &= \lim_{\delta\to 0+} \bar{\rho}(\delta)G(z,\delta)\\
&=\lim_{\delta \to 0+} \frac{ G(z,\delta)}{\| G(\bar{z},\delta)\|}\\
&=\lim_{\delta \to 0+} \frac{\rho(\delta) G(z,\delta)}{\|\rho(\delta) G(\bar{z},\delta)\|}= \frac{v_{\rho}(z)}{\|v_{\rho}(\bar{z}) \|}.
\end{split}
\end{equation*}
We conclude that $v_{\bar \rho}$ is well defined and,  since $\|v_{\bar{\rho}}(\bar{z})\|=1$, it is a non-null function.

\medskip 

To prove that $F_{\bar{\rho}}$ is Lipschitz continuous in $H_+ \cup E$ we
note that, for $z_{n+1}\geq \bar{\delta}$ it follows that
\begin{equation}\label{campo-tilde1}
F_{\bar \rho}(z)=g(z)\, \| G(\bar z,\bar\delta) \|^{-1},
\end{equation}
and for $0<z_{n+1}<\bar{\delta}$ it follows that
\begin{equation}\label{campo-tilde2}
F_{\bar \rho}(z)=\frac{F_\rho(z)}{\left\|F_\rho\left(\bar{z} \sqrt{ 1- z_{n+1}^2}+z_{n+1}e^{n+1}\right)\right\|}. 
\end{equation}
Since $g(z)$ is globally Lipschitz continuous in $\{z \in \mathbb{S}:z_{n+1}\geq \bar\delta\}$, 
$F_\rho(z)$ is Lipschitz continuous in $H_+ \cup E$, and
$$
\lim_{z_{n+1}\to 0}\left\|F_\rho(\bar{z} \sqrt{ 1- z_{n+1}^2}+z_{n+1}e^{n+1})\right\|= \|v_{\rho}(\bar{z})\|>0,
$$
 we obtain that $F_{\bar \rho}(z)$ is the quotient of two Lipschitz continuous functions where the denominator does not vanish. Hence, $F_{\bar \rho}(z)$ is  a Lipschitz continuous function in $H_+ \cup E$. 
\end{proof}

\begin{example}
There exist locally Lipschitz continuous vector fields that are not non-null Poincar\'e compactifiable.
Indeed, consider the vector field defined by the following differential equation
\[
x'=\cos(\|x\|)x,\quad x\in\mathbb{R}^n.
\]
The critical points are
\[\{x\in\mathbb{R}^n\colon \|x\|=\pi/2+k\pi\text{ for some }k\in\mathbb{Z}^+\}.\]
Let $x$ such that $\|x\|=\pi/2$, and consider the sequence
\[\{\lambda_k x\}_{k\in\mathbb{Z}^+},\quad\text{where }\lambda_k=\frac{\frac{\pi}{2}+k\pi}{\frac{\pi}{2}}=1+2k. \]
Then
\[
h(\lambda_k x)=\left(\frac{\lambda_k x}{\sqrt{1+\|\lambda_k x\|^2}},\frac{1}{\sqrt{1+\|\lambda_k x\|^2}}\right)
\to \left(\frac{x}{\|x\|},0\right).
\]
Since $f(\lambda_k x)=0$ for every $k\in\mathbb{Z}^+$, then $F_\rho(x/\|x\|,0)=0$. That is, the vector field is
identically null in $E$.
\end{example}

\begin{example}
Let $n=2$, and define the vector field such that
for $(z_1,z_2,0)\in E$, and $0<\delta<1$, 
\[
G((z_1,z_2,0),\delta)=\left(
\frac{\delta}{ \sqrt{(1-\delta^2)z_1+\delta^2} }
e^{z_1\frac{\sqrt{1-\delta^2}}{\delta}}, 
0,
-\frac{\sqrt{1-\delta^2}z_1}{\sqrt{(1-\delta^2)z_1+\delta^2}}
e^{z_1\frac{\sqrt{1-\delta^2}}{\delta}}
\right).
\]

Then the compactified vector field is null on $E$, 
since if we assume that the vector field is non-null on $E$, 
by Proposition~\ref{pro:nonnull}, there exists $\bar z=(\bar z_1,\bar z_2,0) \in E$ and $0<\bar\delta<1$ such that $f$ is compactifiable by
\[\bar \rho(\delta)=\frac{1}{\|G(\bar z,\delta)\|}=e^{-\frac{\bar z_1\sqrt{1-\delta^2}}{\delta}}, \quad 0< \delta <\bar\delta.\]
Then
\[
v_{\bar \rho}(z)=\left(\lim_{\delta\to 0^+} e^{\frac{(z_1 - \bar z_1)\sqrt{1-\delta^2}}{\delta}},0\right).
\]
In consequence,  $v_{\bar \rho}(z)$ is not defined, when $z_1>\bar z_1$, which is a contradiction with the assumption that $f$ is compactifiable with regularization function $\bar{\rho}$.

Note that if we take other $\tilde z\in E$ and $\tilde\rho(\delta)=1/\|G(\tilde z,\delta)\|$,
\[
\lim_{\delta\to 0^+} \frac{\bar \rho(\delta)}{\tilde \rho(\delta)}=
\lim_{\delta\to 0^+} \frac{\|G(\tilde z,\delta)\|}{\|G(\bar z,\delta)\|}=
\lim_{\delta\to 0^+} e^{(\tilde z_1-\bar z_1)\frac{\sqrt{1-\delta^2}}{\delta}}=
\begin{cases}
0,\quad \text{if }\tilde z_1<\bar z_1,\\
\infty,\quad \text{if }\tilde z_1>\bar z_1.\\
\end{cases}
\]
From this we conclude that the growth of the norm of the compactified vector field along the directions of $E$ 
are not equivalent.

In Section~\ref{sec:3}, we will see that this does not happen for polynomial vector fields. 

\end{example}

We say that two compactifications are {\it equivalent} if the compactified vector fields 
are topologically equivalent.

\begin{theorem}
Any two non-null Poincaré compactification of a fixed locally Lipschitz continuous vector field are equivalent. 
\end{theorem}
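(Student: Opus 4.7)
The plan is to use Proposition~\ref{pro:nonnull} to reduce every non-null compactification to a canonical one of the form $\bar\rho(\delta)=\|G(\bar z,\delta)\|^{-1}$, and then show that a single choice of $\bar z$ can serve two different non-null regularizations $\rho_1,\rho_2$ of the same $f$.

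First I would observe that $F_\rho$ and its canonical counterpart $F_{\bar\rho}$ are pointwise positive scalar multiples of each other: on $H_+$ both are scalar multiples of $g(z)$, and the scalar ratio $\mu_\rho(\delta):=\bar\rho(\delta)/\rho(\delta)=1/(\rho(\delta)\|G(\bar z,\delta)\|)$ is continuous and strictly positive on $(0,1]$ and extends continuously by the limit $\mu_\rho(0)=1/\|v_\rho(\bar z)\|>0$. On the equator the identity $v_{\bar\rho}(z)=v_\rho(z)/\|v_\rho(\bar z)\|$ obtained in the proof of Proposition~\ref{pro:nonnull} shows that the same relation $F_{\bar\rho}=\mu_\rho(z_{n+1})F_\rho$ persists. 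Since $\mu_\rho$ is strictly positive and continuous on the compact interval $[0,1]$, a time reparametrization $ds=\mu_\rho(z_{n+1}(t))\,dt$ sends integral curves of $F_\rho$ onto those of $F_{\bar\rho}$ with the same orientation, so $F_\rho$ and $F_{\bar\rho}$ are topologically equivalent via the identity map.

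The decisive step is then to show that the non-vanishing sets $\{z\in E:v_{\rho_i}(z)\neq 0\}$, $i=1,2$, coincide, so that a single $\bar z$ works for both. I would argue by contradiction: suppose $v_{\rho_1}(z_0)\neq 0$ and $v_{\rho_2}(z_0)=0$ for some $z_0\in E$. From $\rho_1(\delta)\|G(z_0,\delta)\|\to\|v_{\rho_1}(z_0)\|>0$ and $\rho_2(\delta)\|G(z_0,\delta)\|\to 0$ one forces $\rho_2(\delta)/\rho_1(\delta)\to 0$. Since $\rho_2$ is non-null there exists $\tilde z\in E$ with $v_{\rho_2}(\tilde z)\neq 0$; then $\rho_2(\delta)\|G(\tilde z,\delta)\|\to\|v_{\rho_2}(\tilde z)\|>0$, so
\[
\rho_1(\delta)\|G(\tilde z,\delta)\|=\frac{\rho_1(\delta)}{\rho_2(\delta)}\,\rho_2(\delta)\|G(\tilde z,\delta)\|\longrightarrow+\infty,
\]
contradicting the existence of the finite limit $v_{\rho_1}(\tilde z)=\lim_{\delta\to 0^+}\rho_1(\delta)G(\tilde z,\delta)$. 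By symmetry the two non-vanishing sets agree.

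Choosing $\bar z$ in this common non-vanishing set produces a common canonical regularization $\bar\rho_1=\bar\rho_2=\bar\rho$, whence $F_{\bar\rho_1}=F_{\bar\rho_2}$, and transitivity gives $F_{\rho_1}\sim F_{\bar\rho}\sim F_{\rho_2}$. I expect the main obstacle to be precisely the coincidence of non-vanishing sets, which encodes the intrinsic character of the rest points at infinity and without which a common $\bar z$ might not exist. The remaining ingredients---recognizing topological equivalence from pointwise positive scaling together with a continuous time reparametrization, and extending the scaling continuously across the equator---are routine once the canonical form furnished by Proposition~\ref{pro:nonnull} is in hand.
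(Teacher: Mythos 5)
Your proposal is correct and follows the same basic strategy as the paper: reduce each non-null compactification to the canonical regularization $\bar\rho(\delta)=\|G(\bar z,\delta)\|^{-1}$ of Proposition~\ref{pro:nonnull} and observe that $F_\rho$ and $F_{\bar\rho}$ are proportional by a positive continuous factor, hence topologically equivalent via the identity. The difference is that the paper's proof stops there, leaving implicit the step you single out as decisive: given two regularizations $\rho_1,\rho_2$, the canonical forms are built from base points $\bar z_1,\bar z_2$ chosen in the respective non-vanishing sets of $v_{\rho_1}$ and $v_{\rho_2}$, and one must check that a common choice is possible. Your contradiction argument (if $v_{\rho_1}(z_0)\neq 0$ but $v_{\rho_2}(z_0)=0$ then $\rho_2/\rho_1\to 0$, which forces $\rho_1(\delta)\|G(\tilde z,\delta)\|\to\infty$ at any $\tilde z$ where $v_{\rho_2}$ is non-null, contradicting the existence of $v_{\rho_1}(\tilde z)$) cleanly establishes that the non-vanishing sets coincide, so a common $\bar z$ yields literally the same canonical field and transitivity finishes the proof. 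In fact this argument shows directly that $\rho_2/\rho_1$ has a finite positive limit at $0$, so $F_{\rho_1}$ and $F_{\rho_2}$ are already proportional by a positive factor extending continuously to the equator; either way, your write-up supplies a justification the paper's one-line proof omits, and the remaining ingredients (the time reparametrization and the continuity of the scaling factor across $E$) are handled as in the paper.
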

\begin{proof}
Note that for any regularization function $\rho$, defining $\bar{\rho}$ as in Proposition~\ref{pro:nonnull},
the vectors fields $F_\rho$ and $F_{\bar\rho}$ are topologically equivalent since
they are proportional by a positive function, see \eqref{campo-tilde1}-\eqref{campo-tilde2}. Moreover, the  
topological equivalence is the identity. 
\end{proof}

\subsection{Invariance of the equator}

The Poincaré compactification of polynomial vector fields with regularization 
function $\rho(z)=z_{n+1}^{N-1}$ produces a compactification such that $E$ is invariant by the flow of the vector field $F_{\rho}(z)$. This invariance is useful to further project the compactified vector field into the unit disk with differentiability. Nevertheless, the
definition of Poincaré compactification used here does not imply this invariance. 

In this subsection we study when the equator is invariant in 
the classical Poincar\'e compatification, i.e., when the projection is given by the stereographic projection $h:\mathbb{R}^n\rightarrow H_{+}$ defined by
\begin{equation}\label{eq:steregprj}
h(x)=
\left(
\frac{x_1}{ \sqrt{1+\|x\|^2} },\ldots,
\frac{x_n}{ \sqrt{1+\|x\|^2}  },
\frac{1}{ \sqrt{1+\|x\|^2} }\right),
\end{equation}
being its inverse
\[
h^{-1}(z)=\left(\frac{z_1}{z_{n+1}},\ldots,\frac{z_n}{z_{n+1}}\right).
\]

Note that for any identically null compactification, the equator is always invariant as it consists of 
singular points. Therefore, we only need to study the invariance in the case 
of non-null compactifications.

\medskip 

Firstly, we obtain the expression of the compactified vector field in terms of the parametrization of $H_+ \cup E$ given in \eqref{eq:zdelta}. Moreover, we recall that $g(z)=Dh({h}^{-1}(z))f(h^{-1}(z))$, where 
\begin{align*}
Dh({h}^{-1}(z))&
=z_{n+1}\left( \begin{array}{c} I-\pi(z) \pi(z)^T \\ -z_{n+1}\pi(z)^T \end{array} \right),	
\end{align*}
being $I$ the unit matrix of order $n$.
Hence, the compactified vector field is
\begin{equation}\label{def:campo_sobre_S}
F_{\rho}(z_{\delta})=\rho(\delta) g(z_{\delta})= 
 \delta \rho(\delta) \left( \begin{array}{c} f(h^{-1}(z_\delta))-\pi(z_\delta) \pi(z_\delta)^Tf(h^{-1}(z_\delta)) \\ -\delta\pi(z_\delta)^Tf(h^{-1}(z_\delta)) \end{array} \right).
\end{equation}
Then, 
\begin{equation*}
\pi (F_{\rho}(z_{\delta}))= \delta\rho(\delta)\left(f(h^{-1}(z_{\delta}))- \langle \pi(z_{\delta}), f(h^{-1}(z_{\delta})) \rangle \pi(z_{\delta })\right)
\end{equation*}	
where $\langle \cdot,\cdot \rangle$ is the ordinary scalar product in $\mathbb{R}^n$.
The $n+1$ component of the vector field is
\begin{equation*}
\pi_{n+1}(F_{\rho}(z_{\delta}))= -\delta^2\rho(\delta)\langle \pi(z_{\delta}), f(h^{-1}(z_{\delta})) \rangle.
\end{equation*}	
From this, we obtain
\begin{equation}\label{eq:pi_k}
\pi\left(F_{\rho}(z_{\delta})\right)=
\frac{\delta^2\rho(\delta)f(h^{-1}(z_{\delta}))+  \pi_{n+1}(F_{\rho}(z_{\delta}))\pi(z_{\delta})}{\delta}.	
\end{equation}	

Notice that the equator $E$ is invariant under the flows of the compactified vector field $F_{\rho}(z)$ when the last coordinate of $F_{\rho}(z)$ is zero, i.e. $\pi_{n+1}(F_{\rho}(z))=0,$ for every $z\in E$. In the next result we characterize the invariance of the equator in terms of the vector field $f$.

\begin{theorem}\label{theo:invariant_inf_manifold0}
Assume that $f$ is a non-null Poincaré compactifiable Lipschitz continuous vector field, and let $F_{\rho}$ be the 
compactified vector field. The equator $E$ is invariant under the flow of $F_{\rho}$, if and only if for every $z\in  E$,
\begin{equation}\label{eq:theo:invariant}
\lim_{\delta\to 0^+}  \delta^2\rho(\delta)f(h^{-1}(z_{\delta}))=0,
\end{equation}
where $z_{\delta}$ is given by \eqref{eq:zdelta}.
\end{theorem}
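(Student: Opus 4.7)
My plan is to translate the invariance of $E$ into a scalar condition on the $(n+1)$-th component of $F_\rho$, and then to use the explicit formula \eqref{eq:pi_k} together with the continuity of $F_\rho$ on $H_+\cup E$ to pass between this scalar condition and the vector condition \eqref{eq:theo:invariant}. The key preliminary observation is that $E$ is invariant under the flow of $F_\rho$ exactly when $\pi_{n+1}(F_\rho(z))=0$ for every $z\in E$, and since $F_\rho$ is Lipschitz (hence continuous) on $H_+\cup E$, this is equivalent to $\lim_{\delta\to 0^+}\pi_{n+1}(F_\rho(z_\delta))=0$ for every $z\in E$.

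For the implication $(\Leftarrow)$, I would start from the identity $\pi_{n+1}(F_\rho(z_\delta)) = -\delta^2\rho(\delta)\langle \pi(z_\delta), f(h^{-1}(z_\delta))\rangle$ established just above the statement. Assuming \eqref{eq:theo:invariant}, the Cauchy-Schwarz inequality together with $\|\pi(z_\delta)\|\le 1$ yields $|\pi_{n+1}(F_\rho(z_\delta))|\le \|\delta^2\rho(\delta)f(h^{-1}(z_\delta))\|\to 0$, so by continuity $\pi_{n+1}(F_\rho(z))=0$ on $E$, which gives the invariance.

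For the converse, I would solve \eqref{eq:pi_k} for the quantity of interest, namely
\[
\delta^2\rho(\delta)\,f(h^{-1}(z_\delta)) \;=\; \delta\,\pi(F_\rho(z_\delta)) \;-\; \pi_{n+1}(F_\rho(z_\delta))\,\pi(z_\delta),
\]
and let $\delta\to 0^+$. Since $F_\rho$ is continuous on the compact set $H_+\cup E$, $\pi(F_\rho(z_\delta))$ is bounded and the first term vanishes. Invariance of $E$ together with continuity of $F_\rho$ yields $\pi_{n+1}(F_\rho(z_\delta))\to \pi_{n+1}(F_\rho(z))=0$, while $\pi(z_\delta)\to z$ is bounded; hence the right-hand side tends to $0$, which is exactly \eqref{eq:theo:invariant}.

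I do not foresee any serious obstacle: once \eqref{eq:pi_k} is in hand, both implications reduce to elementary limit arguments that rely only on the Lipschitz extension to $H_+\cup E$ guaranteed by non-null compactifiability. The one conceptual subtlety worth flagging is that the scalar quantity $\pi_{n+1}(F_\rho)$ only encodes the component of $f(h^{-1}(z_\delta))$ along $\pi(z_\delta)$, so its vanishing on $E$ does not by itself control the transverse component. The role of \eqref{eq:pi_k} is precisely to reconstruct the full vector $\delta^2\rho(\delta)f(h^{-1}(z_\delta))$ from the already-known components of $F_\rho$, the factor $\delta$ in front of $\pi(F_\rho(z_\delta))$ being what forces the tangential contribution to disappear in the limit.
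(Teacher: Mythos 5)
Your proposal is correct and follows essentially the same route as the paper: both directions rest on reducing invariance of $E$ to the vanishing of $\pi_{n+1}(F_\rho)$ on $E$ and then exploiting the identity \eqref{eq:pi_k} together with the explicit formula for $\pi_{n+1}(F_{\rho}(z_{\delta}))$. The only differences are cosmetic (Cauchy--Schwarz in place of the paper's direct substitution, and explicitly solving \eqref{eq:pi_k} for $\delta^2\rho(\delta)f(h^{-1}(z_\delta))$ rather than arguing that the numerator must vanish because the denominator does).
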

\begin{proof}
We shall prove that for each $z\in E$,
$\pi_{n+1}\left(F_{\rho}(z)\right)=0$  if and only if \eqref{eq:theo:invariant} holds.

 Suppose that $z\in E$ is such that
\[
\pi_{n+1}\left(F_{\rho}(z)\right)=\lim_{\delta \to 0^+} \pi_{n+1}\left(F_{\rho}(z_{\delta})\right)=0.
\]
As the limit of $\pi(F_{\rho}(z_{\delta}))$ exists as $\delta \to 0^+$ and the denominator in \eqref{eq:pi_k} tends to zero, then  
\[
0=\lim_{\delta\to 0^+}  \delta^2\rho(\delta)f(h^{-1}(z_{\delta}))+ z_{\delta } \pi_{n+1}(F_{\rho}(z_{\delta}))
=\lim_{\delta\to 0^+}  \delta^2\rho(\delta)f(h^{-1}(z_{\delta})).
\]

Conversely, assume \eqref{eq:theo:invariant} holds. That is
$$
\lim_{\delta\to 0^+}\delta^2 \rho(\delta)\langle f(h^{-1}(z_{\delta})), \pi(z) \rangle =0.
$$ 
Then
\[
  \begin{split}
 \pi_{n+1}\left(F_{\rho}(z)\right)=&\lim_{\delta \to 0^+} \pi_{n+1}\left(F_{\rho}(z_{\delta})\right)=-
 \lim_{\delta \to 0^+} \delta^2\rho(\delta)\langle \pi(z_{\delta}), f(h^{-1}(z_{\delta})) \rangle\\
 =& - \lim_{\delta \to 0^+} \delta^2\rho(\delta)\langle \pi(z), f(h^{-1}(z_{\delta})) \rangle=0.
 \end{split}
\]

\end{proof}

\section{Families of non-null Poincar\'e compactifiable vector fiels}\label{sec:3}

In this section we discuss three applications of our results:
compactification of polynomial vector fields, polynomial-growth vector fields and 
piecewise linear vector fields.

\subsection{Polynomial vector fields}
In this subsection we apply previous results to polynomial vector fields, in order to show
they provide the classical Poincaré compactification, but desingularizing the 
vector field.

Let $f$ be a  polynomial in $x_1,\dots,x_n$ of degree $N$. Then $f(x)=\sum_{|\alpha|\leq N} f_{\alpha}x^{\alpha}$,  where $\alpha=(\alpha_1,\dots,\alpha_n)$, $\alpha_k \in\mathbb{Z_+}$, $|\alpha|=\sum_{j=1}^n \alpha_j$, $x^{\alpha}= x_1^{\alpha_1}\dots x_n^{\alpha_n}$, and  $f_{\alpha} \in\mathbb{R}^n$. Since $f$ has degree $N$, we are assuming that  $\sum_{|\alpha|= N} f_{\alpha}x^{\alpha}$ is not identically null.

\begin{theorem}[Poincaré]\label{Poincare}
If $f$ is a polynomial vector field of degree $N$, then it is a compactifiable vector field by the regularization function $\rho(\delta)= \delta^{N-1}$, and the equator is invariant under the flow of the compatified vector field.	
\end{theorem}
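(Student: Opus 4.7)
The plan is to verify the two hypotheses of Proposition~\ref{prop:Poincare-compactifiable} for $\rho(\delta) = \delta^{N-1}$, and then check the invariance criterion of Theorem~\ref{theo:invariant_inf_manifold0}. The key computation that drives everything is a formula for $h^{-1}(z_\delta)$: from the stereographic projection \eqref{eq:steregprj} one has $h^{-1}(z) = \pi(z)/z_{n+1}$, and since $\pi(z_\delta) = z\sqrt{1-\delta^2}$ with $z_{n+1}=0$, I obtain $h^{-1}(z_\delta) = \sqrt{1-\delta^2}\,\pi(z)/\delta$. Substituting into $f$ and multiplying by $\delta^N$ gives
\[
\delta^{N} f(h^{-1}(z_\delta)) = \sum_{|\alpha|\le N} f_\alpha\, \delta^{N-|\alpha|}(1-\delta^2)^{|\alpha|/2} \pi(z)^\alpha,
\]
which is continuous on $E \times [0,1]$ and whose limit as $\delta\to 0^+$ is the top homogeneous part $f_N(z) := \sum_{|\alpha|=N} f_\alpha \pi(z)^\alpha$, with the convergence automatically uniform in $z\in E$ by compactness.

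To check condition (a) of Proposition~\ref{prop:Poincare-compactifiable}, I plug the above into formula \eqref{def:campo_sobre_S}: with $\rho(\delta)=\delta^{N-1}$, the factor $\delta\rho(\delta)$ becomes $\delta^N$, absorbed exactly by $\delta^{N} f(h^{-1}(z_\delta))$. Passing to the limit yields
\[
v_\rho(z) = \Bigl( f_N(z) - \langle \pi(z), f_N(z)\rangle\, \pi(z),\, 0 \Bigr),
\]
uniformly in $z\in E$. For condition (b), I note that on $H_+$ the expression $z_{n+1}^N f(h^{-1}(z)) = \sum_{|\alpha|\le N} f_\alpha\, z_{n+1}^{N-|\alpha|} \pi(z)^\alpha$ clears the denominators and extends to a \emph{polynomial} on $\overline{H_+}$. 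Substituting this polynomial into \eqref{def:campo_sobre_S} shows that $\rho(z_{n+1})g(z) = z_{n+1}^{N-1}g(z)$ itself is polynomial in $z_1,\dots,z_{n+1}$, hence smooth on the compact set $\overline{H_+}$ and in particular globally Lipschitz on $H_+$.

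For the invariance of the equator I apply Theorem~\ref{theo:invariant_inf_manifold0}: using $\delta^2\rho(\delta) = \delta^{N+1} = \delta \cdot \delta^N$ and the uniform boundedness of $\delta^N f(h^{-1}(z_\delta))$ (whose limit is $f_N(z)$), I get
\[
\lim_{\delta\to 0^+} \delta^2\rho(\delta)\, f(h^{-1}(z_\delta)) = \lim_{\delta\to 0^+} \delta \cdot \bigl(\delta^N f(h^{-1}(z_\delta))\bigr) = 0\cdot f_N(z) = 0
\]
for every $z\in E$, which gives the desired invariance.

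I do not expect genuine obstacles: the whole argument reduces to the observation that $\delta^{N}$ exactly cancels the denominator $z_{n+1}^{|\alpha|}$ introduced by $h^{-1}$. The only minor bookkeeping point is checking that the compactification is indeed non-null (so that Theorem~\ref{theo:invariant_inf_manifold0} applies as stated); this follows from the hypothesis that the homogeneous part $f_N$ is not identically zero, which guarantees $v_\rho \not\equiv 0$ on $E$ (taking $z$ where $f_N(z)$ is not radial).
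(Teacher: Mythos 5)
Your main computation is the same as the paper's: write $h^{-1}(z_\delta)=\pi(z)\sqrt{1-\delta^2}/\delta$, observe that $\delta^N f(h^{-1}(z_\delta))$ is polynomial in $(\pi(z),\delta,\sqrt{1-\delta^2})$ and converges (uniformly, by compactness) to the top homogeneous part $f_N$, and read off the limit of $F_\rho(z_\delta)$ from \eqref{def:campo_sobre_S}. You are in fact more careful than the paper about condition (b) of Proposition~\ref{prop:Poincare-compactifiable}: your observation that $z_{n+1}^{N-1}g(z)=M(z)\cdot z_{n+1}^{N}f(h^{-1}(z))$ is a polynomial in $z$, hence Lipschitz on the compact closure of $H_+$, is a clean justification that the paper leaves implicit. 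That part is fine.

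The genuine flaw is in your last paragraph. You route the invariance of $E$ through Theorem~\ref{theo:invariant_inf_manifold0}, which is stated only for \emph{non-null} compactifiable fields, and you claim non-nullness follows from $f_N\not\equiv 0$ ``taking $z$ where $f_N(z)$ is not radial.'' Such a $z$ need not exist: by part (c) of Theorem~\ref{Poincare2}, whenever $f_N(x)=q(x)x$ for a scalar homogeneous polynomial $q\not\equiv 0$ (e.g.\ $f(x)=\|x\|^{2}x$ in the plane, $N=3$), the field has degree $N$ and $f_N\not\equiv0$, yet $v_\rho\equiv 0$ on $E$. So your justification for applying Theorem~\ref{theo:invariant_inf_manifold0} fails in exactly those cases. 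The repair is immediate and does not need that theorem at all: from \eqref{def:campo_sobre_S},
\begin{equation*}
\pi_{n+1}\bigl(F_{\rho}(z_{\delta})\bigr)=-\delta^{2}\rho(\delta)\,\bigl\langle \pi(z_{\delta}),f(h^{-1}(z_{\delta}))\bigr\rangle=-\delta\cdot\bigl\langle \pi(z_{\delta}),\delta^{N}f(h^{-1}(z_{\delta}))\bigr\rangle\longrightarrow 0,
\end{equation*}
since the second factor stays bounded; hence the last component of the compactified field vanishes on $E$ and the equator is invariant. (Alternatively, in the identically null case every point of $E$ is a rest point, so $E$ is trivially invariant.) This direct computation is what the paper does.
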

\begin{proof}
For any given $z\in E$ we consider $z_{\delta}=z\sqrt{1-\delta^2}+\delta e^{n+1}$, $\delta\in (0,1]$, and the vector field  $F_{\rho}(z_{\delta})$ on $H_+$.

By taking into account that $h^{-1 }(z_{\delta})=\pi(z) \sqrt{1-\delta^2}/\delta$, we have 
\[
 f(h^{-1}(z_{\delta}))=\sum_{|\alpha|\leq N} f_{\alpha} \pi(z)^{\alpha}  \frac {(1-\delta^2)^{\frac{|\alpha |}{2}}}{\delta^{|\alpha |}} =
  \frac {(1-\delta^2)^{\frac{N}{2}}}{\delta^N}  \sum_{|\alpha|= N} f_{\alpha} \pi(z)^{\alpha} +R(z,\delta),
\]
where $R(z,\delta)$ is a function with $\lim_{\delta\to 0^+} \delta^N R(z,\delta)=0$.	

From \eqref{def:campo_sobre_S}, 
\[
 \begin{split}
 \lim_{\delta\to 0^+} \pi(F_{\rho}(z_{\delta}))&= \sum_{|\alpha|=N} \left( f_{\alpha}\pi(z)^{\alpha} - \langle \pi(z), f_{\alpha}\pi(z)^{\alpha} \rangle \pi(z)\right) \\
 \lim_{\delta\to 0^+} \pi_{n+1} (F_{\rho}(z_{\delta}))&=0,
 \end{split}
\]
which implies that the vector field is compatificable and the equator is invariant under the flow. 
\end{proof}

\begin{theorem}\label{Poincare2}
Let $f$ be a polynomial field of degree $N$ which can be compactified by a regularization function $\rho(\delta)$. 
\begin{itemize}
\item [a)] The equator is invariant under the compactified flow if and only if $\lim_{\delta\to 0} \rho(\delta)\delta^{2-N}=0$.
\item [b)] If $\rho(\delta)=\delta^{N-1}$ and the compactification is identically null, then the vector field $f$ can be compactified by the regularization function $\tilde{\rho}(\delta)=\delta^{N-2}$, but in this case the equator is not invariant under the compactified flow.
\item [c)] The compactification by $\rho(\delta)=\delta^{N-1}$ is identically null if and only if 
the vector field is $f(x)=q(x) x + R(x)$, where $q(x)$ is a scalar homogeneous polynomial of
degree $N-1$, and $R$ is a polynomial of degree strictly lower than $N$.
\end{itemize}
\end{theorem}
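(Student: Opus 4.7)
My plan is to exploit the polynomial decomposition $f=\sum_{k=0}^N f_{[k]}$ into homogeneous components together with the identity $h^{-1}(z_\delta)=\pi(z)\sqrt{1-\delta^2}/\delta$ for $z\in E$, which yields
\[
f(h^{-1}(z_\delta))\;=\;\sum_{k=0}^N f_{[k]}(\pi(z))(1-\delta^2)^{k/2}\delta^{-k},
\]
so that all three claims reduce to matching leading powers of $\delta$.

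For part (a), I would first apply Theorem~\ref{theo:invariant_inf_manifold0} to convert invariance of $E$ into the condition $\lim_{\delta\to 0^+}\delta^2\rho(\delta)f(h^{-1}(z_\delta))=0$ for every $z\in E$. Factoring out $\rho(\delta)\delta^{2-N}$, this becomes $\lim \rho(\delta)\delta^{2-N}\,S(z,\delta)=0$, where $S(z,\delta)=\sum_{k=0}^N f_{[k]}(\pi(z))(1-\delta^2)^{k/2}\delta^{N-k}$ tends to $f_N(\pi(z))$. Since $f_N\not\equiv 0$, I pick $z_0\in E$ with $f_N(\pi(z_0))\neq 0$ and a coordinate $i$ with $(f_N(\pi(z_0)))_i\neq 0$; then $S_i(z_0,\delta)$ is bounded away from $0$ near $\delta=0$, forcing $\rho(\delta)\delta^{2-N}$ to have a finite limit $L$. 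Consequently, the vector limit equals $L\,f_N(\pi(z))$ for every $z\in E$, and the invariance condition reduces to $L=0$.

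For part (c), under $\rho(\delta)=\delta^{N-1}$, a direct expansion via \eqref{def:campo_sobre_S} gives $\lim \pi(F_\rho(z_\delta))=f_N(\pi(z))-\langle\pi(z),f_N(\pi(z))\rangle\pi(z)$ for $z\in E$, while $\pi_{n+1}(F_\rho(z_\delta))\to 0$ automatically (as in the proof of Theorem~\ref{Poincare}). Thus ``identically null'' is equivalent to $f_N(y)=\langle y,f_N(y)\rangle y$ for all $\|y\|=1$, which by homogeneity of $f_N$ lifts to the polynomial identity
\[
f_N(x)\,\|x\|^2 \;=\; \langle x,f_N(x)\rangle\,x.
\]
I would then show $\|x\|^2\mid\langle x,f_N(x)\rangle$ in $\mathbb{R}[x_1,\dots,x_n]$: componentwise, $\|x\|^2$ divides $\langle x,f_N(x)\rangle\,x_i$, and for $n\geq 2$ the polynomial $\|x\|^2=x_1^2+\cdots+x_n^2$ is irreducible over $\mathbb{R}$ and coprime with each $x_i$, producing a polynomial quotient $q(x)=\langle x,f_N(x)\rangle/\|x\|^2$, necessarily homogeneous of degree $N-1$, such that $f_N=qx$; the case $n=1$ is immediate. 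Setting $R=f-qx$ yields $\deg R<N$. The converse is a direct substitution into the same expansion.

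For part (b), I would invoke (c) to write $f=qx+R$ with $q$ homogeneous of degree $N-1$ not identically zero and $\deg R<N$. To verify that $f$ is compactifiable by $\tilde\rho(\delta)=\delta^{N-2}$, I split $g=g_{qx}+g_R$. A direct computation of $g_{qx}$ using \eqref{def:campo_sobre_S} shows that $\delta^{N-2}g_{qx}(z_\delta)$ has components $\delta\,q(\pi(z))\pi(z)(1-\delta^2)^{N/2}$ and $-q(\pi(z))(1-\delta^2)^{(N+1)/2}$ (the potential $\delta^{-1}$ blow-up from $f_N=qx$ is killed by the identity $[I-(1-\delta^2)\pi(z)\pi(z)^T]\pi(z)=\delta^2\pi(z)$), hence Lipschitz on $H_+\cup E$. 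For $R$ with $M=\deg R\leq N-1$, Theorem~\ref{Poincare} applied to $R$ makes $\delta^{M-1}g_R$ Lipschitz on $H_+\cup E$, and thus so is $\delta^{N-2}g_R=\delta^{N-M-1}\cdot\delta^{M-1}g_R$. Therefore $F_{\tilde\rho}$ is Lipschitz on $H_+\cup E$. Finally, by part (a), $E$ is invariant iff $\tilde\rho(\delta)\delta^{2-N}=1\to 0$, which fails; so the equator is not invariant.

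The main obstacle is the divisibility step in (c): proving as a polynomial identity that $\|x\|^2$ divides $\langle x,f_N(x)\rangle$, which rests on the irreducibility of $\|x\|^2$ in $\mathbb{R}[x_1,\dots,x_n]$ for $n\geq 2$ together with a separate trivial treatment of $n=1$.
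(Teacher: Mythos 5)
Your proposal is correct, and for parts (a) and (c) it follows essentially the same route as the paper: expand $f(h^{-1}(z_\delta))$ into homogeneous components, reduce (a) to Theorem~\ref{theo:invariant_inf_manifold0} by isolating the factor $\rho(\delta)\delta^{2-N}$ against a term converging to $f_N(\pi(z))$, and translate the identically-null condition into the identity $\|x\|^2 f_N(x)=\langle x,f_N(x)\rangle\,x$. Two places where you diverge are worth recording. In (c), the paper simply asserts that $q(x)=\langle x,f_N(x)\rangle/\|x\|^2$ is a homogeneous polynomial of degree $N-1$; your divisibility argument via the irreducibility of $\|x\|^2$ in $\mathbb{R}[x_1,\dots,x_n]$ for $n\ge 2$ (with $n=1$ treated separately) is exactly the justification the paper omits, and it closes a genuine small gap. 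In (b), the paper works directly with the relation $\pi(F_{\tilde\rho}(z_\delta))=\pi(F_\rho(z_\delta))/\delta$ together with the vanishing identity \eqref{eq:cnula} to compute the limits for $\tilde\rho(\delta)=\delta^{N-2}$, and obtains non-invariance from the explicit limit $\lim_{\delta\to 0^+}\pi_{n+1}(F_{\tilde\rho}(z_\delta))=-\langle\pi(z),f_N(\pi(z))\rangle$, which is not identically zero; you instead invoke (c) to split $f=qx+R$, verify compactifiability of each piece separately (using $\bigl[I-(1-\delta^2)\pi(z)\pi(z)^T\bigr]\pi(z)=\delta^2\pi(z)$ to cancel the apparent $\delta^{-1}$ singularity of the $qx$ part, and Theorem~\ref{Poincare} for $R$), and then deduce non-invariance from part (a) since $\tilde\rho(\delta)\delta^{2-N}\equiv 1$ does not tend to zero. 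Both arguments are valid; your version of (b) makes the Lipschitz verification more transparent and exhibits the compactified field explicitly, at the cost of making (b) logically dependent on (c), whereas the paper's version needs only the null-limit identity.
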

\begin{proof}
 (a) From Theorem~\ref{theo:invariant_inf_manifold0} the equator is invariant under the flow if and only if for every $z \in E$,
 \[
 0=\lim_{\delta\to 0^+}  \delta^2\rho(\delta)f(h_+^{-1}(z_{\delta}))=\lim_{\delta\to 0} \delta^{2-N} (1-\delta^2)^{N/2} \rho(\delta)  \sum_{|\alpha|=N} f_{\alpha} \pi(z)^{\alpha}.
\]
The result follows straightforward since there exists $z_0\in E$ such that $\sum_{|\alpha|=N} f_{k\alpha} \pi(z_0)^{\alpha}\neq 0$, provided that the vector field has degree $N$. 

(b) Since 
\begin{align*}
\pi(F_{\rho}(z_{\delta}))&= \delta^N \left(f(h^{-1}(z_{\delta}))-\biggl\langle \pi(z_{\delta}), f(h^{-1}(z_{\delta})) \biggl\rangle \pi(z_{\delta })
\right) \\
&= \left( (1-\delta^2)^{\frac{N}{2}} \sum_{|\alpha|=N} f_{\alpha} \pi(z)^{\alpha} + \delta^N R(z,\delta)- \biggl\langle \pi(z_{\delta}),  
(1-\delta^2)^{\frac{N}{2}} \sum_{|\alpha|=N} f_{\alpha} \pi(z)^{\alpha} \biggl\rangle \pi(z_{\delta})
\right),
\end{align*}
it follows from the hypothesis that
\begin{equation}\label{eq:cnula}
\begin{split}
\lim_{\delta\to 0^+} \pi(F_{\rho}(z_{\delta}))= 
  \sum_{|\alpha|=N} f_{\alpha} \pi(z)^{\alpha} 
 - \biggl\langle \pi(z),\sum_{|\alpha|=N} f_{\alpha} \pi(z)^{\alpha} \biggr\rangle \pi(z_{\delta})=0.
\end{split}
\end{equation}  
Taking into account that $\pi(F_{\tilde{\rho}}(z_{\delta}))=\pi(F_{\rho}(z_{\delta}))/{\delta}$ in a similar way  we obtain
\begin{equation}
		\lim_{\delta\to 0^+} \pi(F_{\tilde\rho}(z_{\delta}))= 
		\sum_{|\alpha|=N-1} f_{\alpha} \pi(z)^{\alpha} 
		- \biggl\langle \pi(z),\sum_{|\alpha|=N-1} f_{\alpha} \pi(z)^{\alpha} \biggr\rangle \pi(z).
\end{equation}
 Moreover,  
\begin{align*}
\lim_{\delta\to 0^+} \pi_{n+1} (F_{\tilde{\rho}}(z_{\delta}))&= - \lim_{\delta\to 0^+}
 \biggl\langle \pi(z_{\delta}),  (1-\delta^2)^{\frac{N}{2}} \sum_{|\alpha|=N} f_{\alpha} \pi(z)^{\alpha} + \delta^N R(z,\delta)\biggr \rangle \\
&=-\biggl\langle \pi(z), \sum_{|\alpha|=N} f_{\alpha} \pi(z)^{\alpha} \biggr\rangle .
\end{align*}
Therefore, the vector field is compactifiable by the regularization function $\tilde{\rho}(\delta)$.

Since $\sum_{|\alpha|=N} f_{\alpha} \pi(z)^{\alpha}$ is not identically null, the equator is not invariant under the flow $\pi(F_{\tilde{\rho}}(z_{\delta}))$.

(c) We shall denote $f^N$ to the homogeneous part of degree $N$, that is
\[
f^N(x)=\sum_{|\alpha|=N} f_{\alpha} x^{\alpha}.
\]
Assume that the compactified vector field is identically null by $\rho(\delta)=\delta^{N-1}$.
By \eqref{eq:cnula} we have
$$
f^N(z)=\langle \pi(z), f^N(z) \rangle \pi(z), \quad z\in E.
$$
Let $x\in \mathbb{R}^n$, $y=x/\|x\|$. Then 
\[
 f^N(x)= \frac{\left\langle x,f^N(x)  \right\rangle}{\|x\|^2} x.
\]
Necessarily, $q(x)=\frac{\left\langle x,f^N(x)  \right\rangle}{\|x\|^2}$
is a homogeneous polynomial of degree $N-1$.

Conversely, let  $f^N(x)=x q(x)$, where $q$ is a scalar homogeneous polynomial of degree $N-1$.
For every $z \in E$, $\|\pi(z)\|=1$ and then 
\[
\lim_{\delta\to 0^+} \pi(F_{\rho}(z_{\delta}))= q(\pi(z))\pi(z)- \left\langle \pi(z),q(\pi(z))\pi(z)\right\rangle \pi(z)  =0.
\]
\end{proof}
\begin{remark}
	In  Corollaries \ref{Poincare} and \ref{Poincare2} we have only used that 
	$$
	f(x)= \sum_{l=0}^N f_{l}(x),
	$$
	where $f_{l}(x)$ is a locally Lipschitz continuous homogeneous function of degree $l$, i.e.
	$$
	f_{l}(\lambda x) = \lambda^l f_{l}(x), \quad \lambda\in\mathbb{R},\ x\in\mathbb{R}^n.
	$$	
	See \cite{GPS}.	
\end{remark}

\subsection{Polynomial-growth vector fields}
Let $\mathbb{S}^{n-1}=\{x\in\mathbb{R}^n : \|x\|=1 \}$.
If $f$ is locally Lipschitz continuous, then so is
$$
(x,\delta) \in \mathbb{S}^{n-1} \times (0,1] \to f\left( \frac{x}{\delta}\right).
$$
In the following result we will use the next hypotheses:

 There exists $N \in\mathbb{N}$ such that 
\begin{equation}\label{h:1}
(x,\delta) \in \mathbb{S}^{n-1} \times (0,1] \to \delta^N f\left( \frac{x}{\delta}\right).
\end{equation}
is globally Lipschitz continuous and there exists
\begin{equation}\label{h:2}
\omega_N(x)= \lim_{\delta \to 0^+}\delta^N f\left( \frac{x}{\delta}\right), \quad \text{uniformly in } x\in\mathbb{S}^{n-1} .
\end{equation}
\begin{theorem}
Assume \eqref{h:1} and \eqref{h:2} are satisfied. Then the vector field $f(x)$ is Poincaré compactifiable by the regularization function $\rho(\delta)=\delta^{N-1}$. Moreover the equator is invariant.	
\end{theorem}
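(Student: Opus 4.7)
The plan is to verify the two conditions of Proposition~\ref{prop:Poincare-compactifiable} for $\rho(\delta)=\delta^{N-1}$ and then invoke Theorem~\ref{theo:invariant_inf_manifold0} for the invariance of the equator. The central observation is a change of variables that converts the compactified vector field into the form appearing in hypotheses \eqref{h:1}--\eqref{h:2}. For $z\in E$, write $y:=\pi(z)\in\mathbb{S}^{n-1}$ and $\tilde\delta:=\delta/\sqrt{1-\delta^2}$, so that $h^{-1}(z_\delta)=y/\tilde\delta$, with $\tilde\delta\to 0^+$ iff $\delta\to 0^+$, and
\[
\delta^N f(h^{-1}(z_\delta)) \;=\; (1-\delta^2)^{N/2}\,\tilde\delta^N f(y/\tilde\delta).
\]
Hypothesis \eqref{h:2} then yields $\delta^N f(h^{-1}(z_\delta))\to\omega_N(\pi(z))$ uniformly in $z\in E$. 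Substituting into \eqref{def:campo_sobre_S} and using $\delta\rho(\delta)=\delta^N$, the first $n$ components of $F_\rho(z_\delta)$ converge uniformly to $(I-\pi(z)\pi(z)^T)\omega_N(\pi(z))$, while the last carries an extra factor of $\delta$ and vanishes; this is condition (a) of Proposition~\ref{prop:Poincare-compactifiable}.

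For condition (b), I must show $\rho(z_{n+1})g(z)=z_{n+1}^N\bigl((I-\pi(z)\pi(z)^T)f(h^{-1}(z)),\,-z_{n+1}\pi(z)^T f(h^{-1}(z))\bigr)^T$ is globally Lipschitz on $H_+$. The strategy is to split $H_+$ at $z_{n+1}=1/\sqrt 2$. On the near-equator region $z_{n+1}\le 1/\sqrt 2$ (equivalently $\|\pi(z)\|\ge 1/\sqrt 2$), setting $\sigma=z_{n+1}/\|\pi(z)\|\in(0,1]$ and $y=\pi(z)/\|\pi(z)\|$ gives the identity $z_{n+1}^N f(h^{-1}(z))=\|\pi(z)\|^N\,\sigma^N f(y/\sigma)$. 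By \eqref{h:1}--\eqref{h:2} the kernel $\sigma^N f(y/\sigma)$ is globally Lipschitz and bounded on $\mathbb{S}^{n-1}\times(0,1]$, and the chart $(\pi(z),z_{n+1})\mapsto(y,\sigma)$ is smooth with bounded derivatives in this region, so composition and multiplication by the smooth bounded outer factors yield Lipschitz continuity. On the polar region $z_{n+1}\ge 1/\sqrt 2$, the argument $h^{-1}(z)=\pi(z)/z_{n+1}$ lies in the closed unit ball, where $f$ is Lipschitz by local Lipschitzness, and multiplication by the smooth bounded factors again preserves Lipschitz continuity. Matching across $z_{n+1}=1/\sqrt 2$ by a standard triangle inequality argument, using a common boundary point on the segment joining any two points, yields a single global Lipschitz constant on $H_+$.

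Invariance then follows from Theorem~\ref{theo:invariant_inf_manifold0}: with $\rho(\delta)=\delta^{N-1}$,
\[
\lim_{\delta\to 0^+}\delta^{2}\rho(\delta)f(h^{-1}(z_\delta))=\lim_{\delta\to 0^+}\delta\cdot\bigl(\delta^{N} f(h^{-1}(z_\delta))\bigr)=0\cdot\omega_N(\pi(z))=0
\]
for every $z\in E$, which is immediate from the uniform limit established above. The main obstacle is step (b): carefully tracking Lipschitz constants across the two regions, verifying that the smooth bounded factors multiplying the globally Lipschitz kernel $\sigma^N f(y/\sigma)$ preserve the Lipschitz property, and patching the regions together to obtain a single Lipschitz constant valid on all of $H_+$. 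All other steps reduce to direct substitutions into the formula \eqref{def:campo_sobre_S}.
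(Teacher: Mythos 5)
Your proof is correct and follows essentially the same route as the paper: the same substitution $\tilde\delta=\delta/\sqrt{1-\delta^2}$ reducing $\delta^N f(h^{-1}(z_\delta))$ to the form covered by \eqref{h:1}--\eqref{h:2}, followed by Proposition~\ref{prop:Poincare-compactifiable} and the vanishing of the $(n+1)$-st component for the invariance of the equator. The only difference is that you verify the global Lipschitz condition (b) in more detail, splitting $H_+$ at $z_{n+1}=1/\sqrt{2}$ so that the polar cap (where $\tilde\delta>1$ falls outside the domain of \eqref{h:1}) is handled by local Lipschitz continuity of $f$ on the unit ball --- a step the paper asserts without elaboration.
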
	
\begin{proof}
Considering
$$
\delta^N f\left( \frac{\pi(z)\sqrt{1-\delta^2}}{\delta}\right)= \left( 1-\delta^2\right)^{N/2}\left( \frac{\delta}{\sqrt{1-\delta^2}}\right)^N f\left( \frac{\pi(z)\sqrt{1-\delta^2}}{\delta}\right),
$$
we obtain that
$$
\delta^N f\left( \frac{\pi(z)\sqrt{1-\delta^2}}{\delta}\right)
$$
is globally Lipschitz continuous in $ E \times (0,1] $ and
$$
\lim_{\delta\to 0}\delta^N f\left( \frac{\pi(z)\sqrt{1-\delta^2}}{\delta}\right)=\omega_N(\pi(z)), \quad\text{ uniformly in } E.
$$

By \eqref{def:campo_sobre_S} we obtain
\begin{equation*}\begin{split}
		\pi (F_{\rho}(z_{\delta}))&= \delta^N\left(f\left( \frac{\pi(z)\sqrt{1-\delta^2}}{\delta}\right)- \biggl\langle \pi(z_{\delta}), f\left( \frac{\pi(z)\sqrt{1-\delta^2}}{\delta}\right) \biggr\rangle \pi(z_{\delta })\right),\\
		\pi_{n+1}(F_{\rho}(z_{\delta}))&= -\delta^{N+1}\biggl\langle \pi(z_{\delta}), f\left( \frac{\pi(z)\sqrt{1-\delta^2}}{\delta}\right) \biggr\rangle .
	\end{split}
\end{equation*}	
Taking limit as $\delta \to 0^+$, we have
\begin{align*}
\lim_{\delta\to 0^+}\pi (F_{\rho}(z_{\delta}))&=\omega_N(\pi(z))-\biggl\langle \pi(z),\omega_N(\pi(z))\biggr\rangle \pi(z), \\
\lim_{\delta\to 0^+}\pi_{n+1} (F_{\rho}(z_{\delta}))&=0.	
\end{align*}
being both limits uniform in $E$. By Proposition \ref{prop:Poincare-compactifiable} we get that $F_{\rho}(z)$ admits a Lipschitz continuous extension to $H_+ \cup E$, that is, the vector field $x'=f(x)$ is Poincaré compactifiable. Moreover the equator is invariant.
	
\end{proof}	

\subsection{Piecewise polynomial vector fields}

In this subsection we apply previous results to a class of vector fields showing polynomial behaviour near the infinity, the piecewise polynomial vector fields. Then, we restrict ourselves to the case where the maximum degree of the involved polynomials is equal $1$, that is, the piecewise linear (PWL) vector fields. PWL vector fields have attracted the attention of different authors since they appeared in the work of Andronov et al \cite{AVK66}. Nowadays, different works use these systems to produce simple exemples of very complicated dynamical objects or to provide results which are not easy to prove in a general framework, see \cite{LT, BBCK07} and references therein. Moreover, PWL differential system are used to model real systems (electronic circuits, neuronal behaviours, etc ...) in a framework which is more friendly for the analysis and less expensive computationally, see \cite{BBCK07}.

\medskip 

\begin{definition}

Consider finitely many connected subsets with non-empty interior $S_i \subset \mathbb{R}^n$, $i=0,\dots,p$, such that $S_{i}\cap S_{j}=\emptyset$ if $i\neq j$ and $\bigcup_{i=0}^{p} {S}_i=\mathbb{R}^n$.

A function $f:\mathbb{R}^n\to \mathbb{R}^n$ is a {\it piecewise polynomial vector field} if there exist polynomials $f_i\in\mathbb{R}[x_1,\ldots,x_n]$, such that
\[
f(x)=\sum_{i=0}^{p} \chi_{_{S_i}}(x) f_i(x),
\]
where $\chi_{_{S_i}}$ is the characteristic funtion of the set $S_i$.
\end{definition}

\begin{proposition}
Every polynomial piecewise continuous vector field is compactifiable for any projection $h$. 
\end{proposition}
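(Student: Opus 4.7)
My plan is to reduce the result to the already-settled polynomial case by treating the pieces separately and then combining them under a single regularization function. A caveat on interpretation first: since the characteristic-function combination $\sum_i \chi_{S_i} f_i$ may jump across the boundaries $\partial S_i$ and therefore fail to be even continuous on $\mathbb{R}^n$, Definition~\ref{def:compactifiable} cannot be applied verbatim. I read the proposition as asserting the existence of a regularization function $\rho$ such that $\rho(z_{n+1})g(z)$ admits a Lipschitz continuous extension to each closed sector $\overline{h(S_i)}\cap(H_+\cup E)$ separately, i.e.\ a piecewise Lipschitz extension to $H_+\cup E$ whose pieces are the closures of the projected regions $h(S_i)$.

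Set $N=\max_{0\le i\le p}\deg f_i$ and take $\rho(\delta)=\delta^{N-1}$. For each index $i$, regard $f_i$ as a (globally defined) polynomial vector field on $\mathbb{R}^n$ and let $\widetilde g_i(z)=Dh(h^{-1}(z))\,f_i(h^{-1}(z))$ be its projection under $h$. Theorem~\ref{Poincare}, together with Theorem~\ref{Poincare2}(b) to absorb the extra factor $\delta^{N-\deg f_i}$ when $\deg f_i<N$, shows that $\rho(z_{n+1})\widetilde g_i(z)$ has a Lipschitz continuous extension $\widetilde F_{\rho,i}$ to all of $H_+\cup E$. On the sector $h(S_i)\subset H_+$ the projected vector field $g$ coincides with $\widetilde g_i$ by construction, so $\rho(z_{n+1})g(z)|_{h(S_i)}=\widetilde F_{\rho,i}(z)|_{h(S_i)}$; restricting $\widetilde F_{\rho,i}$ to $\overline{h(S_i)}\cap(H_+\cup E)$ then supplies the required Lipschitz extension on that piece. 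Taking these extensions piece by piece produces the compactified vector field in the sense described above.

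The hard part is interpretive rather than computational. One must explicitly relax Definition~\ref{def:compactifiable} to allow a piecewise Lipschitz extension, and then verify that a single choice of $\rho$ works uniformly across the pieces; this is why I take $N$ to be the maximum degree, so that the exponent $N-1$ simultaneously regularizes every $f_i$. In pieces of strictly lower degree the ``excess'' powers of $\delta$ merely reinforce vanishing at the equator and are compatible with Lipschitz continuity, in line with Theorem~\ref{Poincare2}(b). No new analytic estimate is required beyond what has already been established for the polynomial case, but the conclusion should be stated carefully as piecewise compactifiability, since the discontinuities of $f$ across $\partial S_i$ are inherited by $F_\rho$ across $h(\partial S_i)$.
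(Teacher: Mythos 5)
There is a genuine gap, and it begins at your interpretive step. You discard the continuity of $f$ and then relax Definition~\ref{def:compactifiable} to a notion of ``piecewise compactifiability''; from that point on you are proving a different statement from the one claimed. The proposition should be read as concerning \emph{continuous} piecewise polynomial vector fields (that is the role of the word ``continuous'' in the statement, consistent with the later application to continuous PWL systems). Indeed, under your reading the proposition is simply false: if $f$ jumps across some boundary $\partial S_i$, then $g(z)=Dh\left(h^{-1}(z)\right)f\left(h^{-1}(z)\right)$ is discontinuous at interior points of $H_+$, and multiplying by a positive factor $\rho(z_{n+1})$ can never remove an interior discontinuity, so no regularization function in the sense of Definition~\ref{def:compactifiable} can exist. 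The fact that your construction produces an object with jumps across $h(\partial S_i)$ should have signalled that continuity is a hypothesis of the statement, not an obstacle to be defined away.

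Once continuity is kept, the proof is much shorter and needs none of the polynomial compactification machinery. The paper's route: first show that a continuous piecewise polynomial vector field is locally Lipschitz continuous --- given $x,\bar x$ in a ball $B$, subdivide the segment joining them by points $x^i=\alpha_i x+(1-\alpha_i)\bar x$ so that consecutive points lie in the closure of a single region $\bar S_{j_i}$; continuity of $f$ guarantees that the value of $f$ at each boundary point $x^i$ agrees with $f_{j_i}(x^i)$, so telescoping gives $\|f(x)-f(\bar x)\|\leq L\|x-\bar x\|$ with $L$ the maximum of the Lipschitz constants of the $f_i$ on $B$ --- and then invoke Theorem~\ref{theorem:tc}, which already asserts that every locally Lipschitz continuous vector field is Poincaré compactifiable for any projection $h$. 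This yields a (possibly identically null) compactification with no degree bookkeeping whatsoever; your choice $\rho(\delta)=\delta^{N-1}$ with $N$ the maximal degree is not needed. Incidentally, your appeal to Theorem~\ref{Poincare2}(b) to ``absorb'' the factors $\delta^{N-\deg f_i}$ is a misreference: that item concerns replacing $\delta^{N-1}$ by $\delta^{N-2}$ when the compactification is identically null, not multiplying an already compactified field by extra powers of $\delta$ (though that sub-step could be justified directly, since products of bounded Lipschitz functions on $H_+\cup E$ are Lipschitz).
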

\begin{proof}
We will prove that the vector field is locally Lipschitz continuous, and we conclude by Theorem~\ref{theorem:tc}.

\medskip 

Let $B$ a ball in $\mathbb{R}^n$, and take $x,\bar x\in B$.  
There exists $0=\alpha_0<\ldots<\alpha_m=1$
such that if
\[
x^i=\alpha_i x+(1-\alpha_i) \bar x
\]
then
$x^i,x^{i+1}\in \bar S_{j_i}$,
for certain $0\leq j_i\leq m$. 

\medskip 

Since the polynomials are Lipschitz in $B$, let $L$ be the maximum of their
Lipschitz constants. Then
\[
\|f(x)-f(\bar x)\|\leq \sum_{i=0}^{m-1} \left\|f(x^{i})-f(x^{i+1})\right\|\leq 
L \sum_{i=0}^{m-1} \left\|x^{i}-x^{i+1}\right\|=L\|x-\bar x\|.
\]

\end{proof}

Even when the boundaries of the piecewise polynomial vector field are $(n-1)$-dimensional algebraic manifolds and move away from the origin in a way which can be handled, the difference between the degrees of the involved polynomials can force the compactified vector field to be identically null at infinity. Next we introduce an example showing this behaviour. 

\begin{example}
 Given the piecewise polynomial system 
 \[
  f((x_1,x_2))=\left\{ 
	  \begin{array}{ll}
	   (x_2,\,x_1) & x_1\leq -1,\\
	   (x_1+x_2+x_1^2,\, x_1+x_2+x_1x_2) & |x_1|\leq 1,\\
	   (2x_1+x_2,\, x_1+2x_2) & x_1>1,
	  \end{array}
       \right.
 \]
 the projected vector field defined over the hemisphere $H_+$ is given by
 \[
  g(z)=\left\{
	  \begin{array}{ll}
	    g_{-}(z) & z_1\leq -z_{n+1},\\
	    g_0(z) & |z_1|\leq z_{n+1},\\
	    g_+(z) & z_1 \geq z_{n+1},
	  \end{array}
       \right.
 \]
where 
\[
 g_-(z)=\begin{pmatrix}
         z_2-2z_1^2z_2\\
         z_1-2z_1z_2^2\\
         -2 z_1z_2z_3
        \end{pmatrix},\quad
 g_0(z)=\begin{pmatrix}
         z_1+z_2+\frac{z_1^2}{z_3}-z_1^3-z_1z_2^2-2z_1^2z_2-\frac{z_1^4}{z_3}-\frac {z_1^2z_2^2}{z_3}\\
         z_1+z_2+\frac{z_1z_2}{z_3}-z_1^2z_2-2z_1z_2^2-z_2^3-\frac{z_1^3z_2}{z_3}-\frac{z_1z_2^3}{z_3}\\
         -(z_1^3+z_1^2z_3+z_1z_2^2+2z_1z_2z_3+z_2^2z_3)
        \end{pmatrix},
\]
and 
\[
  g_+(z)=\begin{pmatrix}
         2 z_1 - 2 z_1^3 + z_2 - 2 z_1^2 z_2 - 2 z_1 z_2^2\\
         z_1 + 2z_2 - 2z_1^2z_2 -  2z_1z_2^2 - 2z_2^3\\
         -2z_1^2z_3 - 2z_1z_2z_3 - 2z_2^2z_3
        \end{pmatrix}.
\]
From the expression of $g_0(z)$, the regularization function must be $\rho(\delta)=\delta$. Therefore, the regularized vector field $\rho(z_{n+1})g(z)$ extends continuously to the equator, but it is identically null at the equator.
\end{example}

At this point, we restrict our attention to piecewise polynomial vector fields such that all the polynomial vector fields $f^i$ having the same degree $N$. In particular, we consider the case $N=1$, which corresponds with the family of the piecewise linear (PWL) vector fields.  

\medskip

\begin{definition}

Consider finitely many connected subsets with non-empty interior $S_i \subset \mathbb{R}^n$, $i=0,\dots,p$, such that $S_{i}\cap S_{j}=\emptyset$ if $i\neq j$, $\bigcup_{i=0}^{p} {S}_i=\mathbb{R}^n$ and $\sum_{ij}=\bar{S}_i \cap \bar{S}_j$ is either an $(n-1)$-dimensional manifold or is the empty set.

A function $f:\mathbb{R}^n\to \mathbb{R}^n$ is a {\it piecewise linear vector field} if there exist matrices of order $n$, $A_i$, and vectors, $b_i\in \mathbb{R}^n$, such that
\[
f(x)=\sum_{i=0}^{p} \chi_{_{S_i}}(x) (A_i x +b_i),
\]
where $\chi_{_{S_i}}$ is the characteristic funtion of the set $S_i$.
\end{definition}


\medskip 


Next, we rewrite PWL vector fields in a form, called the Lure's form (see Lemma \ref{Lure}(c)-(d)), which can be considered suitable for the compactification process.

\begin{lemma}\label{Lure}
 Consider a continuous piecewise linear vector field $f$.
 \begin{itemize}
  \item [a)] Then $\sum_{ij}$ is an affine subspace of dimension $n-1$.
  \item [b)] Given two different boundaries $\sum_{ij}$ and $\sum_{i'j'}$, then  $\sum_{ij}  \cap \sum_{i'j'}= \emptyset$.
  \item [c)] There exist $\tau_1<\dots<\tau_p$, a continuous piecewise funtion $\varphi:\mathbb{R} \to \mathbb{R}$ given by
  \[
     \varphi(\sigma)=
  \left\{
    \begin{array}{ll}
     \alpha_{0}\sigma+\beta_{0} & \sigma \leq  \tau_{1},\\
     \alpha_i\sigma+\beta_i         & \sigma \in [\tau_i,\tau_{i+1}],\\
     \alpha_{p}\sigma+\beta_{p} & \sigma \geq  \tau_{p},     
    \end{array}
  \right.
  \]  
a $n\times n$ matrix $A$, and vectors $k,b\in \mathbb{R}^n$ such that
  \[
   f(x)=Ax + \varphi(k^Tx)b.
  \] 
  \item [d)] There exists a linear change of coordinates $y=Mx$ such that 
  \[
   f(y)=\bar{A}y+\varphi(e_1^Ty)\bar{b},
  \]
  where $\bar{A}=M^{-1}AM$, $\bar{b}=M^{-1}b$ and $e_1$ is the first element of the cannonical base of $\mathbb{R}^n.$
 \end{itemize}
\end{lemma}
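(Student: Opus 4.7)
The plan is to address the four claims in turn, with continuity of $f$ across each interface as the common engine: that continuity places every $\Sigma_{ij}$ inside the zero set of the affine map $x\mapsto (A_i-A_j)x-(b_j-b_i)$, and everything else is squeezed out of this rank constraint.

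For (a), the solution set of $(A_i-A_j)x=b_j-b_i$ is an affine subspace of $\mathbb{R}^n$ containing the $(n-1)$-dimensional manifold $\Sigma_{ij}$, so its dimension is at least $n-1$. After merging any neighboring pieces that carry identical affine data, the subspace is proper, hence of dimension exactly $n-1$, which forces $A_i-A_j=u_{ij}k_{ij}^{T}$ to be rank one and $b_j-b_i=c_{ij}u_{ij}$. Thus $\Sigma_{ij}$ lies in the hyperplane $H_{ij}=\{k_{ij}^{T}x=c_{ij}\}$, and I would upgrade containment to equality by noting that $\Sigma_{ij}$ is closed in $H_{ij}$ (being closed in $\mathbb{R}^n$) and relatively open in $H_{ij}$ (near any manifold point, the $H_{ij}$-neighbors still separate the same two pieces), so that connectedness of $H_{ij}$ yields $\Sigma_{ij}=H_{ij}$. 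For (b), suppose $x_0\in\Sigma_{ij}\cap\Sigma_{i'j'}$ with $\{i,j\}\neq\{i',j'\}$; then $H_{ij}$ and $H_{i'j'}$ cross transversally at $x_0$ and divide a small ball around it into four sectors carrying pieces $S_{m_1},\dots,S_{m_4}$. Walking once around $x_0$ and applying continuity across each of the four arcs in turn forces all four affine maps to coincide, contradicting the presence of two genuinely distinct interfaces through $x_0$.

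For (c), by (a) and (b) every boundary is a full hyperplane and any two of them are disjoint, so all boundaries are pairwise parallel. Fix a common unit normal $k$, list the heights $\tau_1<\cdots<\tau_p$ so that the $\ell$-th boundary is $\{k^{T}x=\tau_\ell\}$, and arrange the pieces $S_0,\dots,S_p$ as the ordered slabs between them; the rank-one analysis at each boundary then gives $A_{\ell+1}-A_\ell=u_\ell k^{T}$ and $b_{\ell+1}-b_\ell=-\tau_{\ell+1}u_\ell$. The main obstacle is to exhibit a single vector $b\in\mathbb{R}^n$ and scalars $\lambda_\ell$ with $u_\ell=\lambda_\ell b$ for every $\ell$; I would tackle this by induction on $p$, taking $b:=u_{\ell_0}$ for some nonzero $u_{\ell_0}$ and extracting the required directional constraint on the remaining $u_\ell$ from the global continuity together with the rank-one structure across the parallel slabs. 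Granted this, set $A:=A_0-\alpha_0 bk^{T}$ for any fixed $\alpha_0$ and define $\varphi$ as the unique scalar continuous piecewise-linear function with slope $\alpha_\ell=\alpha_0+\sum_{m\leq\ell}\lambda_m$ on $(\tau_\ell,\tau_{\ell+1})$ and with intercepts $\beta_\ell$ pinned down by matching $b_\ell$ at each node; by construction $f(x)=Ax+\varphi(k^{T}x)b$ on every piece. For (d), pick any invertible $M$ such that the change of variables $y=Mx$ realizes $k^{T}x=e_1^{T}y$ (for instance, so that the first column of $M^{-1}$ is $k$), substitute into (c), and transport the differential equation to $y$-coordinates; the resulting expression is $f(y)=\bar{A}y+\varphi(e_1^{T}y)\bar{b}$ with $\bar{A}=M^{-1}AM$ and $\bar{b}=M^{-1}b$ as claimed.
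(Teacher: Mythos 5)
Your handling of (a), (b) and (d) is sound and in places more careful than the paper's own proof: the paper simply identifies $\Sigma_{ij}$ with the solution set of $(A_i-A_j)x=b_j-b_i$, whereas you justify why that solution set is a proper hyperplane (merging adjacent pieces with identical affine data) and why the inclusion of $\Sigma_{ij}$ in it is an equality (open-and-closed in a connected hyperplane). The problem is part (c), at the step you yourself label ``the main obstacle'': producing a single vector $b$ with $u_\ell=\lambda_\ell b$ for \emph{every} boundary --- and, a point you do not flag at all, with every constant term $b_\ell$ also a scalar multiple of that same $b$, since $f(x)=Ax+\varphi(k^Tx)b$ forces $b_\ell=\beta_\ell b$ on each slab. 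You defer both to an unspecified ``induction on $p$ extracting the required directional constraint from global continuity,'' but continuity across the boundary $\{k^Tx=\tau_{\ell+1}\}$ only relates $A_{\ell+1}-A_\ell$ to $b_{\ell+1}-b_\ell$ at that one hyperplane; it imposes no relation between $u_\ell$ and $u_{\ell'}$ for $\ell\neq\ell'$, so there is nothing for the induction to propagate.

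Indeed the step is not merely unproved but false without further hypotheses. Take $n=2$ with the three slabs $x_1\le 0$, $0\le x_1\le 1$, $x_1\ge 1$ and
\begin{equation*}
f(x_1,x_2)=(0,0) \text{ on the first},\qquad f(x_1,x_2)=(x_1,0) \text{ on the second},\qquad f(x_1,x_2)=(x_1,\,x_1-1) \text{ on the third}.
\end{equation*}
This is a continuous piecewise linear field meeting all the hypotheses, yet the jump directions across the two boundaries are $u_0=(1,0)^T$ and $u_1=(0,1)^T$, which cannot both be multiples of one vector $b$; no Lur'e form exists, so your induction cannot close. (The same difficulty already occurs with two zones when $b_0$ is not parallel to the jump direction.) To be fair, the paper's proof commits exactly the same leap --- it writes $b_k=\beta_k b$ and $A_k=A+\alpha_k\,b\,k^T$ with no justification --- so you have faithfully reproduced its structure, including its weakest point; but a proof that postpones the only nontrivial assertion to a hoped-for induction is not a proof, and here the assertion needs an additional hypothesis on $f$ (e.g.\ that all inter-zone jumps and all constant terms live in a common one-dimensional direction) even to be true.
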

\begin{proof}
  Since the vector field is continuous, the boundary $\sum_{ij}$ can be written as the $x\in\mathbb{R}^n$ such that $(A_{i}-A_{j})x=(b_{j}-b_i)$, which proves (a). 
  
  Assuming that $\sum_{ij}  \cap \sum_{i'j'}\neq \emptyset$, it follows that either $\sum_{ij'}$ or $\sum_{i'j}$ is a $n-2$-dimensional affin manifold, which contradicts the hypothesis, so we conclude statement (b).
  
  From statements (a) and (b), it follows that there exists a vector $k\in \mathbb{R}^n$ such that every boundary $\sum_{ij}$ can be written as $\sum_{ij}=\{x\in \mathbb{R}^n: k^Tx=\tau_{ij}\}$. 
  
  Reindexing the regions and the boundaries if necessary, we consider $\tau_{1}<\dots<\tau_{p}$ and the boundaries   
  $\sum_{i}=\{x\in \mathbb{R}^n: k^Tx=\tau_{i}\}$. 
  
Denote $j$ to the index of the region $S_j$ containing the origin. Let $A=A_j$ and $b=b_j$. Therefore, the piecewise linear function $\varphi(\sigma)$ defined as
 \[
  \varphi(\sigma)=
  \left\{
    \begin{array}{ll}
     \alpha_{0}\sigma+\beta_{0} & \sigma \leq  \tau_{1},\\
     \alpha_i\sigma+\beta_i         & \sigma \in [\tau_i,\tau_{i+1}],\\
     \alpha_{p}\sigma+\beta_{p} & \sigma \geq  \tau_{p},     
    \end{array}
  \right.
 \]
can be obtained from the following equation  
 \begin{align*}
  b_k&=\beta_k\,b,\\
  A_k&=A+\alpha_k\,b\,k^T,
 \end{align*}
 since from the last equation we conclude that $(A_k-A_j)x=\alpha_k\,b\,\tau_k$ for $x\in \sum_k$, which proves statement (c).
\end{proof}

The compactification of the continuous PWL vector fields has been also addressed in some papers. Nevertheless, every time just for a particular group of these vector fields~\cite{LL18,LT}. Next we consider the general case. 
Considering the PWL system in the Lure's form given in Lemma \ref{Lure}(d), from \eqref{def:campo_sobre_S}, the projected vector field writes as
\[
 z'=g(z)= 
 \left( 
   \begin{array}{c} I-\pi(z) \pi(z)^T \\ -z_{n+1}\pi(z)^T \end{array}
 \right)
 \left(A\, \pi(z) + z_{n+1}\varphi\left(\frac {z_1}{z_{n+1}}\right) b\right).
\]
which is already defined in $H_+\cup E$, so, in this case, the regularization function is $\rho(\delta)=1$.

Notice that the boundary $\sum_i=\{x\in\mathbb{R}^n: \varphi(e_1^Tx)=\tau_i\}$ at the half-sphere $H_+\cup E$ is given by $z\in H_+\cup E$ such that $z_1-z_{n+1}\tau_i=0$ with $i=0,\dots,p$, which extends continuously to the equator $E$ as the $\mathbb{S}^{n-2}$ given by $z_1=0, z_{n+1}=0$. Moreover, for those $z\in H_+\cup E$ such that $z_1 \neq 0$ the compactified vector field at the equator writes as 
\begin{equation}\label{PWL_equator}
 z'=g(z)= 
 \left( 
   \begin{array}{c} I-\pi(z) \pi(z)^T \\ 0^T \end{array}
 \right)
 \left(A  +\alpha_{i}\, b e_{1}^T \right)\pi(z),
\quad \text{with }i\in\{0,p\}. 
\end{equation}
Since $A_{0}=A  +\alpha_{0}\, be_{1}^T$ and $A_{p}=A  +\alpha_{p}\, be_{1}^T$
are the matrices of the linear systems defined in the external domains, these systems play a relevant role in the compactification process. 

\begin{theorem}
 Consider a continuous PWL vector field in the Lure's form 
 \[
  f(x)=Ax+\varphi(e_1^T x)b,
 \]
 and let $A_{0}$ and $A_{p}$ be the matrices of the linear systems in the external domains.  The compactification of the vector field is identically null if and only if both matrices $A_{0}$ and $A_{p}$ are diagonalizable and the diagonal matrices are $\lambda_{0}I$ and $\lambda_{p}I$ repectively. 
\end{theorem}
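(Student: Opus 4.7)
The strategy is to read \eqref{PWL_equator} pointwise and translate ``identically null on $E$'' into an algebraic condition on the external matrices. For $z\in E$ with $z_1\neq 0$ the $(n+1)$-th coordinate of the compactified field is automatically zero, so the statement is equivalent to
\[
\bigl(I - \pi(z)\pi(z)^T\bigr)\,A_i\,\pi(z)=0\quad\text{for all }z\in E\text{ with }\sign(z_1)\text{ selecting }i\in\{0,p\}.
\]
Geometrically this means $A_i\pi(z)$ is parallel to $\pi(z)$, i.e.\ $\pi(z)$ is an eigenvector of $A_i$. Moreover, as $z$ ranges over the portion of $E$ with $z_1>0$ (respectively $z_1<0$), $\pi(z)$ sweeps the open hemisphere $\{v\in\mathbb{S}^{n-1}:v_1>0\}$ (resp.\ $\{v_1<0\}$) of $\mathbb{S}^{n-1}$.

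For the direct implication I would establish the following linear-algebraic lemma: if every unit vector in an open hemisphere $U\subset\mathbb{S}^{n-1}$ is an eigenvector of a matrix $A$, then $A=\lambda I$ for some $\lambda\in\mathbb{R}$. To prove it, pick two linearly independent $v,w\in U$ with $Av=\lambda_v v$ and $Aw=\lambda_w w$; for sufficiently small $\varepsilon>0$ the vector $v+\varepsilon w$ still has the same sign in the first coordinate, so its normalization lies in $U$ and must be an eigenvector, say $A(v+\varepsilon w)=\mu(v+\varepsilon w)$. Expanding and using the linear independence of $v,w$ yields $\lambda_v=\mu=\lambda_w$. Hence a single scalar $\lambda$ works for every $v\in U$, and the linear subspace $\{x\in\mathbb{R}^n:Ax=\lambda x\}$ contains the open solid cone over $U$; since that cone spans $\mathbb{R}^n$, we conclude $A=\lambda I$. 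Applying the lemma to $A_p$ on the hemisphere $\{v_1>0\}$ and to $A_0$ on $\{v_1<0\}$ gives $A_p=\lambda_p I$ and $A_0=\lambda_0 I$, which is exactly ``diagonalizable with diagonal form $\lambda_i I$''.

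The converse is a direct verification: if $A_i=\lambda_i I$ then $A_i v=\lambda_i v$ is parallel to $v$, so $(I-vv^T)A_i v=\lambda_i(I-vv^T)v=0$ for every $v\in\mathbb{S}^{n-1}$, and the compactified field in \eqref{PWL_equator} vanishes throughout $E$ (including the lower-dimensional stratum $z_1=0$, by continuity). The main obstacle is the open-hemisphere lemma, and it is really the only non-routine step; once it is in place, the rest amounts to correctly quoting \eqref{PWL_equator} and matching the sign of $z_1$ to the right external matrix.
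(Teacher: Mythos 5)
Your proof is correct and follows essentially the same route as the paper: both reduce the null condition via \eqref{PWL_equator} to $\bigl(I-\pi(z)\pi(z)^T\bigr)A_i\pi(z)=0$ on the corresponding open hemisphere, read this as saying every such direction is an eigenvector of $A_i$, and conclude $A_i=\lambda_i I$. The only difference is that you actually prove the open-hemisphere-of-eigenvectors lemma (and note that the cone over the hemisphere spans $\mathbb{R}^n$), a step the paper asserts without justification, so your write-up is if anything more complete.
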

\begin{proof}
 Let us consider $z\in H_+\cup E$ with $z_1>0$. The case $z_1<0$ follows in a similar way. Assuming that $A_{p}$ is diagonalizable and the diagonal matrix is $\lambda_{p}\,I$, it follows $A_{p}\pi(z)=\lambda_{p}\pi(z)$ for every $z$. From \eqref{PWL_equator}, the expression of the vector field at the equator is 
 \[
  z'=g(z)=
   \left( \begin{array}{c} ( I -\pi(z) \pi(z)^T) A_{p} \pi(z)  \\0^T \end{array} \right)= 
   \left( \begin{array}{c} A_{p} \pi(z) - \lambda_{p} \pi(z) \|\pi(z)\|^2 \\0^T \end{array} \right),
 \]
which is identically null, since $\|\pi(z)\|=1.$

Conversely, suppose that the  vector field is identically null, then
\[
 ( I -\pi(z) \pi(z)^T) A_{p} \pi(z) =0, 
\]
for every $z\in H_+\cup E$ with $z_1>0$. Hence, 
\begin{eqnarray*}
 A_{p} \pi(z) &=& \pi(z) \pi(z)^T A_{p} \pi(z)\\
               &=&( \pi(z)^T A_{p} \pi(z)) \pi(z).
\end{eqnarray*}
Therefore $\pi(z)^T A_{p} \pi(z)$ is the eigenvalue of $\pi(z)$ for every $z$, which implies that every  direcction is a eigenvector. We conclude that the matrix $A_{p}$ is diagonalizable and the diagonal matrix is $\lambda\, I$.   
\end{proof}

\section*{Acknowledgments}
The three authors are supported by Ministerio de Econom\'ia y Competitividad through the project MTM2017-83568-P (AEI/ERDF, EU).
The first and second authors are also partially supported by the Junta de Extremadura/FEDER grants numbers GR18023 and IB18023.

\end{document}